\newtheorem{th1}{Theorem}[section]
\newtheorem{lem}[th1]{ Lemma}
\newtheorem{rem}[th1]{ Remark}
\title[\textbf{Blow-up of semidiscrete solution}]{Blow-up of semidiscrete solution of a nonlinear parabolic equation with gradient term}
\author{Houda Hani} 
\address{Laboratoire de Math\`ematiques: \\ Mod\'elisation D\'eterministe et Al\'eatoire\\ 
 universit\'e de sousse,\\
 houda.hani84@gmail.com}
\author{Moez Khenissi}
\address{Laboratoire de Math\`ematiques: \\ Mod\'elisation D\'eterministe et Al\'eatoire\\ 
 universit\'e de sousse,
	moez.khenissi@essths.u-sousse.tn}
\begin{document}

	\begin{abstract}
		This paper is concerned with approximation of blow-up phenomena in nonlinear parabolic problems. We consider the equation $u_{t}=u_{xx}+\left| u\right| ^{p}-b(x)\left|u_{x}\right|^{q}$ in a bounded domain, we study the behavior of the semidiscrete problem. Under some assumptions we show existence and unicity of the semidiscrete solution, we show that it blows up in a finite time and we prove the convergence of the semidiscrete problem. Finally, we give an approximation of the blow up rate and the blow up time of the semidiscrete solution.  
	\end{abstract}
	\keywords{\textbf{Nonlinear parabolic equation, semidiscrete solution, blow-up in finite time, gradient term, numerical blow up time, numerical blow up rate, convergence.}}
	
	\maketitle
	\tableofcontents
	\section{Introduction}
    \noindent	There is a large number of nonlinear partial differential equations of parabolic type whose solution for a given initial data cannot be extended globally in time and becomes unbounded in finite time. Such a phenomenon is called blow-up, this can occur in nonlinear equations if the heat source is strong enough.\\
    
	In this paper, we consider the initial boundary value problem for the following nonlinear parabolic partial differential equation
	\begin{equation}
	\left\{ 
	\begin{array}{llll}
	u_{t}=\Delta u+\left| u\right| ^{p}-b\left|\nabla u\right|^{q}  \text {  in   } \Omega\times(0,T^{*}),\\
	u(x,0)=u_{0}(x)  \text {  for   } x\in \Omega,\\
	u(x,t)=0 \text { for  } x\in \partial\Omega,\ \ t\in (0,T^{*}), \\
	\end{array}
	\right.
	\label{exacte}
	\end{equation}
	in a bounded domain $\Omega\subset \mathbb{R}^{d}$, for $p,\ q>1$ and $b$ is a continuous positive and bounded function.\\
	Here $\left( 0,T^{*}\right) $ is the maximal time interval on which $\left\|u(.,t)\right\|_{\infty}:=\max\limits_{x\in\Omega}|u(x,t)|<\infty.$ The time $T^{*}$ may be finite or infinite. When $T^{*}$ is infinite, we say that the solution $u$ exists globally. When $T^{*}$ is finite, then we have $$\lim\limits_{t\longrightarrow T^{*}}\left\| u(.,t)\right\| _{\infty}=+\infty.$$ In this case, we say that the solution $u$ blows up in a finite time and $T^{*}$ is called the blow up time.\\
	The above problem was introduced by Chipot and Weissler in 1989 (\cite{chipotweissler}) in the case $b=1$. They have proved local existence, uniqueness and regularity for the problem in a bounded domain $\Omega \subset \mathbb{R}^{d}$. They showed that for $s$ sufficiently large  $$u\in C^{1}\left((0,T^{*});W_{0}^{1,s}(\Omega)\right)\bigcap C\left((0,T^{*});W^{2,\frac{s}{q}}(\Omega)\right),$$  $\left\|u(t)\right\|_{\infty}$ and $\left\|\nabla u(t)\right\|_{\infty}$ are bounded on any interval $[0,T]$ with $T<T^{*}.$\\
 When $b$ is positive constant, problem \eqref{exacte} is related to a popular model arising in the study of the dynamic of population (see \cite{souplet}).  There has been many works in the past concerning numerical computation of solutions of nonlinear parabolic equation but without the gradient term (see \cite{hamada}, \cite{cho}, \cite{nakagawa} and \cite{Chen}). Note that the gradient term has a damping effect working against blow up.\\
	The theoretical study of blowing up solutions of \eqref{exacte} when $b$ is a constant, has been the subject of investigations of many authors (see \cite{cheblikquittner}, \cite{souplet}, \cite{ souplettayachi}, \cite{soupletweissler}, \cite{snoussi} and the references therein). In particular, in \cite{souplettayachi}, the author has proved that under some assumptions on $p$, $q$, the initial data and $b$, the solution of \eqref{exacte} blows up in a finite time. They proved the next theorem 
	\begin{th1}
	Assume $p>1$ and $1<q\leq \dfrac{2p}{p+1}$, $u_{0}$ sufficiently regular satisfies 
	\begin{equation*}
	E(u_{0})=\dfrac{1}{2}\left\|\nabla u_{0}\right\|_{2}^{2}-\dfrac{1}{p+1}\left\|u_{0}\right\|^{p+1}_{p+1}<0.
	\end{equation*}
	Let $u$ be the solution of \eqref{exacte} such that $u_{t}\geq 0.$ Moreover, suppose that $\dfrac{-E(u_{0})}{\left\|u_{0}\right\|^{2}_{2}}$ is large enough if $q<\dfrac{2p}{p+1}$, or that $b$ is sufficiently small if $q=\dfrac{2p}{p+1}.$\\
	Then the solution of \eqref{exacte} blows up in a finite time.
	\end{th1}

\noindent In \cite{khe} and \cite{hani} we have studied the problem for $b=1$, we have constructed a finite difference scheme which approximate the exact problem \eqref{exacte} (for $b=1$). We have showed that under some assumptions on $p$, $q$ and the initial data, the numerical solution blows up in a finite time and we have estimated the numerical blow-up time. We have also proved that although the exact solution blows up in one point, the numerical solution blows up in more than one point under some assumptions on $p$ and $q$.\\

\noindent In \cite{alfonsi}, authors proved that blow up in finite time occurs for $1<q\leq \dfrac{2p}{p+1}$. In particular, for $q=\dfrac{2p}{p+1}$ and $b$ is a small real such that $0\leq b< \dfrac{p-1}{2}\bigg(\dfrac{2}{p+1}\bigg)^{1/p+1}$, solution blows up in finite time for a positive initial data $u_0$ sufficiently regular satisfying:
$E(u_0)<0\ \ and \ \  \Delta u_0+ u_0 ^{p}-b\left|\nabla u_0\right|^{q}\geq 0 $
(this last assumption ensuring the positiveness of $u$ and $u_t$ for all $t$). However, the existence of such initial data is guaranteed only for $d=1$ and $p<5$.\\
In \cite{kawohl}, Kawohl and Peletier, showed that the gradient damping term prevents blow up if $1<p\leq q=2$.\\

In \cite{angenent} and \cite{soup} authors have considered the next problems
$$u_t-\Delta u=a(x)u^p+\left|\nabla u\right|^{q}$$
$$u_t- u_{xx}=f(u)\left| u_x\right|^{q-1}u_x$$
They showed that under some assumptions on the initial data, $\left|\left|a\right|\right|_\infty$, $p$, $q$ and $f$, we have blow up of the gradient of the solution $u$.\\
To our knowledge, there are no theoretical nor numerical results concerning the case where $b$	is a function independent of the solution $u$. All the theoretical study of \eqref{exacte} concerns only the case where $b$ is a positive constant.\\

\noindent In this paper, we are interested in the numerical study of the above problem using the semidiscrete form defined in \eqref{Eh}. The semi-discretization in space of \eqref{exacte} leads to an initial value problem for a system of nonlinear ordinary differential equations. We give some assumptions under which the solution of \eqref{Eh} blows up in a finite time and we estimate the numerical blow-up time and the numerical blow up rate.\\
Based on these numerical results, the theoretical study of \eqref{exacte} will be studied in a future paper.\\
	
\noindent	Our paper is written in the following manner. In the next section, we prove some results about the semidiscrete solution. In the third section, under some hypotheses, we show that the solution of the semidiscrete problem blows up in a finite time. In the fourth section, we give a result about the convergence of the semidiscrete solution to the theoretical one when the mesh size goes to zero. In section 5, we give an approximation of the blow up rate. In section 6, we give an estimate of the blow up time of the semidiscrete solution . Finally, in the last section, we present some numerical experiments.
	\section{The semidiscrete problem}
\noindent	We consider the semilinear parabolic equation
	\begin{equation}
	\left\{ 
	\begin{array}{llll}
	u_{t}=u_{xx}+\left| u\right| ^{p}-b\left|u_{x}\right|^{q}  \text {  in   } (-1,1)\times(0,T^{*}),\\
	u(x,0)=u_{0}(x)  \text {  for   } x\in (-1,1),\\
	u(-1,t)=u(1,t)=0 \text { for  } t\in (0,T^{*}),  \\
	\end{array}
	\right.
	\label{E}
	\end{equation}
	where $p>1$, $1< q\leq \dfrac{2p}{p+1}$ and $b$ is a continuous, positive and bounded function.\\
    The initial data $u_{0}$ is a continuous, nonconstant and nonnegative function in $[-1,1]$. We suppose also that $\left\|u_{0}\right\|_{\infty}$ is large enough.
	\subsection{Definition of the semidiscrete problem}
	Let $N$ be a positive integer representing the number of subdivisions of the interval $[-1,1]$ and $h$ the spacial mesh size defined below such that $N=E\left(\dfrac{2}{h}\right)+1$, where $E(X)$ is the integer part of $X$.\\
	We define the grid $x_{j}=-1+jh$ for $0\leq j \leq N+1$, and we approximate the solution $u$ of \eqref{E} by $U_{h}(t)=(u_{0}(t),...,u_{N+1}(t))'$.\\
	Spacial discretization of \eqref{E} yields 
	\begin{equation}
	\left\{ 
	\begin{array}{lllll}
	\dfrac{du_{j}(t)}{dt}-\delta^{2}_{x}u_{j}(t)+b_j\left|\delta_{x}u_{j}(t)\right|^{q}=\left| u_{j}(t)\right| ^{p}, \ \ \  t\in(0,T^*_{h})  \text { and } 1\leq j \leq N,\\
u_{j}(0)=u_{j}^{0}\geq0 \text{ for }1\leq j \leq N,\\
u_{0}(t)=u_{N+1}(t)=0,\ \ \ t\in (0,T^*_{h}).
	\end{array}
	\right.\\
	\label{Eh}
	\end{equation} 
	Here we define:
	\begin{itemize}
		\item $b_j$  the approximation of $b(x_j)$,		
\item 	 

$	\delta^{2}_{x}u_{j}=\dfrac{u_{j+1}-2u_{j}+u_{j-1}}{h^{2}}$ an approximation of $u_{xx}$ 
	 \item $\delta^{+}_{x}u_{j}=\frac{u_{j+1}-u_{j}}{h},\ \ \,\delta^{-}_{x}u_{j}=\frac{u_{j}-u_{j-1}}{h}\ \ and \ \ 	\delta_{x}u_{j}=\dfrac{u_{j+1}-u_{j-1}}{2h}=\dfrac{\delta^{+}_{x}u_{j}+\delta^{-}_{x}u_{j}}{2} $ approximations of $u_{x}$.\\
			\end{itemize}
Here $(0,T^*_{h})$  is the maximal time interval on which $\left\|U_{h}(t)\right\|_{\infty}<\infty$ with
					$$\left\| U_{h}(t)\right\| _{\infty}=\max\limits_{j=0,...,N+1}\left| u_{j}(t)\right|. $$ 
\begin{enumerate}
 \item If $T^*_{h}=+\infty$ then $U_{h}$ is a global solution.\\
\item If $T^*_{h}<+\infty$ we say that the solution $U_{h}$ achieves blow up in a finite time and we have
\begin{equation*}
\left\| U_{h}(t)\right\| _{\infty}<\infty \text{ for } t\in [0,T^*_{h}) \text { but } \lim_{t\rightarrow T^*_{h}}	\left\| U_{h}(t)\right\| _{\infty}=\infty.
\end{equation*}
 In this case, the time $T^*_{h}$ is called the numerical blow-up time of the solution $U_{h}(t)$.
\end{enumerate} 
\vspace{0.5cm}
Let $T_h<T^*_{h}$, $h_{0}>0$ sufficiently small and $M_{T_h}:=\max\limits_{\underset{0\leq j \leq N+1}{0\leq t\leq T_h}}\left|  u_{j}(t)\right| $ which is bounded before blow-up, then we define the spacial mesh size by
$$h=\min\left(h_{0},\left(\dfrac{2}{b_{\infty}q}M_{T_h}^{-q+1} \right)^{\frac{1}{2-q}}\right)$$
where $b_{\infty}$ denotes $\left\| b\right\| _{\infty}$.\\
We define the $l^{\alpha}$ norm of the numerical solution by
\begin{equation*}
\left\| U_{h}(t)\right\| _{\alpha}=\left( \sum\limits_{j=1}^{N}h\left| u_{j}(t)\right| ^{\alpha}\right) ^{\frac{1}{\alpha}} \text{ for all } \alpha\geq 1 \ \text{and} \ t\in [0,T^*_{h}).
\end{equation*}
Let $T<T^{*}$, we denote by
\begin{equation*}
\vvvert{u}\vvvert =\max\limits_{\underset{x\in[-1,1]}{t\in[0,T]}}\left| u(x,t)\right|.
\end{equation*}
the $L^{\infty}$ norm of the exact solution of \eqref{E} in $[-1,1]\times [0,T]$.\\

\noindent	In this section, we give some properties of the semidiscrete solution.
\subsection{Properties of the semidiscrete solution}
The next lemma shows the positivity of the semidiscrete solution.
\begin{lem}
Let  $U_{h}\in C^{1}\left((0,T^*_{h}),\mathbb{R}^{N+2}\right) $ be the solution of  \eqref{Eh}  with initial data $U_{h}^{0}$. 
If $U_{h}^{0}\geq 0$ then $U_{h}(t)\geq 0$ for all $t\in (0,T^*_{h}).$
\end{lem}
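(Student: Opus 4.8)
The plan is to prove positivity of the semidiscrete solution by a maximum-principle argument adapted to the ODE system. Since $U_h \in C^1$ solves a system of ODEs, I would first invoke a continuity/open-set argument: let $t^* = \sup\{t \in [0,T^*_h) : u_j(s) \geq 0 \text{ for all } j \text{ and all } s \in [0,t]\}$. By the initial condition $U_h^0 \geq 0$ and continuity, $t^* > 0$ (or at least the set is nonempty and we may work on $[0,t^*]$). The goal is to show $t^* = T^*_h$. Suppose for contradiction that $t^* < T^*_h$. Then by continuity there is an index $j_0 \in \{1,\dots,N\}$ with $u_{j_0}(t^*) = 0$ (the boundary nodes are always zero), and $u_{j_0}$ is the first component to touch zero.

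The key step is to examine the differential equation at that node and time. At $t = t^*$, we have $u_{j_0}(t^*) = 0$ while $u_{j_0 \pm 1}(t^*) \geq 0$, so
\begin{equation*}
\delta^2_x u_{j_0}(t^*) = \frac{u_{j_0+1}(t^*) - 2u_{j_0}(t^*) + u_{j_0-1}(t^*)}{h^2} = \frac{u_{j_0+1}(t^*) + u_{j_0-1}(t^*)}{h^2} \geq 0.
\end{equation*}
The equation gives
\begin{equation*}
\frac{du_{j_0}}{dt}(t^*) = \delta^2_x u_{j_0}(t^*) - b_{j_0}\left|\delta_x u_{j_0}(t^*)\right|^q + \left|u_{j_0}(t^*)\right|^p = \delta^2_x u_{j_0}(t^*) - b_{j_0}\left|\delta_x u_{j_0}(t^*)\right|^q,
\end{equation*}
and the problematic term is $-b_{j_0}|\delta_x u_{j_0}(t^*)|^q$, which can be negative. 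This is the main obstacle: the gradient damping term does not respect the maximum principle in the naive way, so one cannot immediately conclude $\frac{du_{j_0}}{dt}(t^*) \geq 0$. To handle this, I would use the structure $\delta_x u_{j_0} = \frac{u_{j_0+1} - u_{j_0-1}}{2h}$ together with $u_{j_0}(t^*) = 0$ and the nonnegativity of the neighbors, and in particular the restriction on the mesh size $h = \min(h_0, (\frac{2}{b_\infty q} M_{T_h}^{-q+1})^{1/(2-q)})$, which is precisely designed so that the discrete Laplacian term dominates the gradient term. The idea is to bound $b_{j_0}|\delta_x u_{j_0}|^q \leq \delta^2_x u_{j_0}$ at such a contact point, using $|\delta_x u_{j_0}| \leq \frac{1}{2}(\delta^+_x u_{j_0} + |\delta^-_x u_{j_0}|)$-type estimates and the fact that at a zero of $u_{j_0}$ with nonnegative neighbors both $\delta^+_x u_{j_0} = u_{j_0+1}/h \geq 0$ and $-\delta^-_x u_{j_0} = u_{j_0-1}/h \geq 0$, so that $|\delta_x u_{j_0}|^2 \leq \frac{1}{4h^2}(u_{j_0+1}+u_{j_0-1})^2$, and comparing with $\delta^2_x u_{j_0} = \frac{1}{h^2}(u_{j_0+1}+u_{j_0-1})$.

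Alternatively — and this is probably the cleaner route actually used — I would regularize: consider $v_j = u_j + \varepsilon e^{\lambda t}$ for small $\varepsilon > 0$ and suitable $\lambda$, or more standard for this kind of damping term, argue via Gronwall. Writing the equation as $\frac{du_j}{dt} = \delta^2_x u_j + c_j(t) u_j$ is not possible directly because of the $|\cdot|^q$ term, but one can still compare: at the first contact time and node, if $\frac{du_{j_0}}{dt}(t^*) < 0$ one derives a contradiction with $u_{j_0}$ being $\geq 0$ just before $t^*$ and $= 0$ at $t^*$ (which forces $\frac{du_{j_0}}{dt}(t^*) \leq 0$, and one needs strict inequality fails) — so the heart is the inequality $\delta^2_x u_{j_0}(t^*) \geq b_{j_0}|\delta_x u_{j_0}(t^*)|^q$, which I would establish from the mesh condition on $h$ and the a priori bound $M_{T_h}$ on the solution on $[0,T_h]$. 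Once that inequality holds, $\frac{du_{j_0}}{dt}(t^*) \geq 0$, contradicting the strict decrease needed to make $u_{j_0}$ go negative, so $t^* = T^*_h$ and positivity holds throughout. I expect the verification of the mesh-size-dependent inequality to be the technical core, while the maximum-principle skeleton is routine.
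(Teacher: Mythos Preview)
Your approach differs from the paper's in its core idea. The paper argues by contradiction that the minimum $m=\min_{j,t\in[0,T_h]}u_j(t)$ is strictly negative, attained at some interior $(j_0,t_0)$, and then introduces the \emph{multiplicative} weight $Z_h(t)=e^{\lambda t}U_h(t)$ with $\lambda<0$ and $|\lambda|$ chosen so large that
\[
b_{j_0}\Bigl|\delta_x u_{j_0}(t_0)\Bigr|^{q}-\lambda\,u_{j_0}(t_0)<0,
\]
which is possible precisely because $u_{j_0}(t_0)<0$. This extra negative term absorbs the troublesome gradient contribution, and a direct sign comparison of $\dfrac{dz_{j_0}}{dt}(t_0)-\delta_x^2 z_{j_0}(t_0)$ with the equation gives the contradiction. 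In particular the paper does \emph{not} use the mesh-size restriction on $h$ anywhere in this lemma; that restriction is only used later, in the proof that $dU_h/dt\ge0$.

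Your route instead works at the first zero-contact time $t^*$ and tries to dominate $b_{j_0}|\delta_x u_{j_0}(t^*)|^q$ by $\delta_x^2 u_{j_0}(t^*)$ via the mesh condition. That inequality does hold under the paper's choice of $h$ (since $|u_{j_0+1}-u_{j_0-1}|\le u_{j_0+1}+u_{j_0-1}$ at the contact point), so the idea is sound. However, as you partly anticipate, obtaining only $\dfrac{du_{j_0}}{dt}(t^*)\ge0$ is not by itself a contradiction: the derivative could vanish, and $u_{j_0}$ could still dip below zero immediately after. You would have to carry out the $\varepsilon$-regularization you allude to (or a strict-subsolution comparison) to close the argument. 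The paper's exponential-weight trick avoids this entirely by working at a \emph{strictly} negative minimum, where the sign of $u_{j_0}(t_0)$ itself supplies the slack.
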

\begin{proof} 
		The proof is inspired from \cite{nabongo}. Let $T_h<T^*_{h}$ and $m=\min\limits_{\underset{0\leq j \leq N+1}{0\leq t\leq T_h}} u_{j}(t).$\\
		Since for $0\leq j \leq N+1$, $u_{j}$ is a continuous function, there exists $t_{0} \in (0,T_{h})$ such that $m=u_{j_{0}}(t_{0})$ for a certain $j_{0}\in \left\{0,...,N+1\right\}.$\\
		Assume that $m<0.$\\
		If $j_{0}=0$ or $j_{0}=N+1$, we have a contradiction because $u_{0}(t)=u_{N+1}(t)=0$ for all $t\in[0,T_{h}).$\\
		If $j_{0}\in \left\{1,...,N\right\}$, it is not hard to see that\\
		\begin{equation}
		\dfrac{du_{j_{0}}}{dt}(t_{0})=\lim_{k\rightarrow 0}\dfrac{u_{j_{0}}(t_{0})-u_{j_{0}}(t_{0}-k)}{k}<0
		\label{1etoile}
		\end{equation}
		and 
		\begin{equation}
		\delta_{x}^{2}u_{j_{0}}(t_{0})=\dfrac{u_{j_{0}-1}(t_{0})-2u_{j_{0}}(t_{0})+u_{j_{0}+1}(t_{0})}{h^{2}}\geq 0.
		\label{2etoile}
		\end{equation}
		Define the vector $Z_{h}(t)=e^{\lambda t}U_{h}(t)$ where $\lambda <0$ such that $|\lambda|$ is large enough and 
		\begin{equation*}
		b_{j_0}\left|\dfrac{u_{j_{0}+1}(t_{0})-u_{j_{0}-1}(t_{0})}{2h}\right|^{q}-\lambda u_{j_{0}}(t_{0})<0.
		\end{equation*}
		Using \eqref{1etoile} and \eqref{2etoile} we obtain
		\begin{equation*}
		\dfrac{dz_{j_{0}}}{dt}(t_{0})=\lim_{k\rightarrow 0}\dfrac{z_{j_{0}}(t_{0})-z_{j_{0}}(t_{0}-k)}{k} \leq \lim_{k\rightarrow 0} e^{\lambda t_{0}} \dfrac{u_{j_{0}}(t_{0})-u_{j_{0}}(t_{0}-k)}{k}<0
		\end{equation*}
		and 
		\begin{equation*}
		\delta_{x}^{2}z_{j_{0}}(t_{0})=\dfrac{z_{j_{0}-1}(t_{0})-2z_{j_{0}}(t_{0})+z_{j_{0}+1}(t_{0})}{h^{2}}=e^{\lambda t_{0}} \delta_{x}^{2}u_{j_{0}}(t_{0})\geq 0,
		\end{equation*}
		which implies that
		\begin{equation}
		\dfrac{dz_{j_{0}}}{dt}(t_{0})-\delta_{x}^{2}z_{j_{0}}(t_{0})+e^{\lambda t_{0}}\left(b_{j_0}\left|\dfrac{u_{j_{0}+1}(t_{0})-u_{j_{0}-1}(t_{0})}{2h}\right|^{q}-\lambda u_{j_{0}}(t_{0})\right)<0.
		\label{eq4}        
		\end{equation}
		On the other hand we have
		\begin{equation}
		\dfrac{du_{j_{0}}}{dt}(t_{0})-\delta_{x}^{2}u_{j_{0}}(t_{0})+b_{j_0}\left|\dfrac{u_{j_{0}+1}(t_{0})-u_{j_{0}-1}(t_{0})}{2h}\right|^{q}=\left|u_{j_{0}}(t_{0})\right|^{p}\geq 0,
		\label{a}
		\end{equation}
		but
		\begin{equation*}
		\dfrac{du_{j_{0}}}{dt}(t_{0})=-\lambda e^{-\lambda t_{0}}z_{j_{0}}(t_{0})+e^{-\lambda t_{0}}\dfrac{dz_{j_{0}}}{dt}(t_{0})=-\lambda u_{j_{0}}(t_{0})+e^{-\lambda t_{0}}\dfrac{dz_{j_{0}}}{dt}(t_{0}),
		\end{equation*}
		and
		\begin{equation*}
		\delta_{x}^{2}u_{j_{0}}(t_{0})=e^{-\lambda t_{0}}\delta{x}^{2}z_{j_{0}}(t_{0}),
		\end{equation*}
		then \eqref{a} implies
		\begin{equation*}
		-\lambda e^{-\lambda t_{0}}z_{j_{0}}(t_{0})+e^{-\lambda t_{0}}\dfrac{dz_{j_{0}}}{dt}(t_{0})-e^{-\lambda t_{0}}\delta{x}^{2}z_{j_{0}}(t_{0})+b_{j_0}\left|\dfrac{u_{j_{0}+1}(t_{0})-u_{j_{0}-1}(t_{0})}{2h}\right|^{q}\geq 0,
		\end{equation*}
		and so
		\begin{equation*}
		\dfrac{dz_{j_{0}}}{dt}(t_{0})-\delta^{2}z_{j_{0}}(t_{0})+e^{\lambda t_{0}}\left(b_{j_0}\left|\dfrac{u_{j_{0}+1}(t_{0})-u_{j_{0}-1}(t_{0})}{2h}\right|^{q}-\lambda u_{j_{0}}(t_{0})\right)\geq 0,
		\end{equation*}
		which is a contradiction because of \eqref{eq4}.
	\end{proof}
      \subsection{Existence and unicity of the semidiscrete solution}
      In this section, we prove existence and unicity of the semidiscrete solution of \eqref{Eh}.
        \begin{th1}
        	For all $p>1$ and $1<q\leq \dfrac{2p}{p+1}$ problem \eqref{Eh} has a unique maximal solution $U_{h}\in C^{1}((0,T^*_{h}),\mathbb{R}^{N+2})$. 
        \end{th1}
         To prove the theorem we need the next lemma
         \begin{lem}
         	Let $m\geq 1$ and $\alpha,\ \beta \in\mathbb{R}$. Then we have
         	\begin{equation}
         	\left| \left| \alpha \right| ^{m}-\left| \beta \right| ^{m}\right|\leq m \left| \alpha-\beta \right| \left( \left| \alpha \right| ^{m-1}+\left| \beta \right| ^{m-1} \right) 
         	\label{inegalite} 
         	\end{equation} 
         \end{lem}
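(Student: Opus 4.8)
The plan is to reduce the two–variable inequality to the elementary one–variable estimate for the power function $t \mapsto t^m$ and then to bring in the reverse triangle inequality. First I would assume without loss of generality that $|\alpha| \ge |\beta|$ (the roles of $\alpha$ and $\beta$ being symmetric, and the case $|\alpha| = |\beta|$, in particular $\alpha = \beta$, being trivial since both sides vanish or the right side dominates). Under this assumption the left–hand side of \eqref{inegalite} equals $|\alpha|^m - |\beta|^m \ge 0$, so it suffices to bound this quantity.

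Next, since $t \mapsto t^m$ is $C^1$ on $[0,\infty)$ for $m \ge 1$, I would write, either by the mean value theorem or equivalently as $|\alpha|^m - |\beta|^m = \int_{|\beta|}^{|\alpha|} m\, t^{m-1}\, dt$, that there is a point $\xi \in [\,|\beta|, |\alpha|\,]$ with $|\alpha|^m - |\beta|^m = m\, \xi^{m-1}\,(|\alpha| - |\beta|)$. Because $m - 1 \ge 0$ and $0 \le \xi \le |\alpha|$, the monotonicity of $s \mapsto s^{m-1}$ gives $\xi^{m-1} \le |\alpha|^{m-1} \le |\alpha|^{m-1} + |\beta|^{m-1}$.

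Finally, the reverse triangle inequality yields $|\alpha| - |\beta| \le |\alpha - \beta|$, and combining this with the previous bound gives $|\alpha|^m - |\beta|^m \le m\,\bigl(|\alpha|^{m-1} + |\beta|^{m-1}\bigr)\,|\alpha - \beta|$, which is exactly \eqref{inegalite}. There is essentially no hard step here; the only points that deserve a word of care are the symmetry reduction and the observation that the estimate $\xi^{m-1} \le |\alpha|^{m-1}$ genuinely uses the hypothesis $m \ge 1$ (for $0 < m < 1$ the power function $s \mapsto s^{m-1}$ is decreasing and the argument would have to be reorganized).
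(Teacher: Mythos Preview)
Your argument is correct and complete: the symmetry reduction, the mean value theorem applied to $t\mapsto t^m$ on $[\,|\beta|,|\alpha|\,]$, the monotonicity of $s\mapsto s^{m-1}$ for $m\ge 1$, and the reverse triangle inequality combine exactly as you describe to give the stated bound.

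As for comparison with the paper: the paper does not actually prove this lemma. It is stated as an auxiliary inequality and then immediately used (via \eqref{inegalite}) inside the proof of the existence and uniqueness theorem for the semidiscrete system; the \texttt{proof} environment that follows the lemma in the source is the proof of that theorem, not of the lemma itself. So there is nothing to compare against, and your write-up would in fact fill a small gap in the exposition.
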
           
        \begin{proof}
        	 According to the Cauchy-Lipschitz theorem, we know that existence and unicity of solution of \eqref{Eh} hold if the nonlinear term is a locally Lipschitz function.\\
        	 \noindent Let $f(X)=\delta_{x}^{2}X+\left|X\right|^{p}-b\left|\delta_{x}X \right|^{q}$, we shall prove that $f$ is a locally Lipschitz function.  	 
        \noindent Let $X^{*}\in  \mathbb{R}^{N+2}$ such that $X^{*}_{0}=X^{*}_{N+1}=0$, we denote $B_{(X^{*},r)}$ the ball with center $X^{*}$ and radius $r$. Let $X,\  Y\in B_{(X^{*},r)}$,
        then
        	\begin{eqnarray}
        	\left\|f(X)-f(Y) \right\|_{2}&=&\left\|\delta_{x}^{2}X-\delta_{x}^{2}Y+\left|X\right|^{p}-\left|Y\right|^{p}-b\left( \left|\delta_{x}X \right|^{q}- \left|\delta_{x}Y \right|^{q}\right)  \right\|_{2} \nonumber \\
        	&\leq & \left\|\delta^{2}_{x}X-\delta^{2}_{x}Y\right\|_{2}+ \left\|\left|X\right|^{p}-\left|Y\right|^{p}\right\|_{2}+b_{\infty}\left\|  \left|\delta_{x}X \right|^{q}- \left|\delta_{x}Y \right|^{q} \right\|_{2}
        	\label{1star}
        	\end{eqnarray}
        	A straightforward calculation yields
        	\begin{equation}
        \left\|\delta^{2}_{x}X-\delta^{2}_{x}Y\right\|_{2}\leq \dfrac{4}{h}\left\| X-Y\right\|_{2}.
        	\label{delta}
        	\end{equation}
        	Using \eqref{inegalite} we get 
        	\begin{equation}
        	\left\|\left|X\right|^{p}-\left|Y\right|^{p}\right\|_{2}\leq p \left(  { \left\|X\right\|_{\infty}^{p-1}+\left\|Y\right\|_{\infty} ^{p-1}}\right) \left\| X-Y\right\|_{2}  
        	\label{2star}
        	\end{equation}
        	and
        	\begin{equation*}
        	\left\|\left| \delta_{x}X\right| ^{q}-\left| \delta_{x}Y\right| ^{q}\right\|_{2}\leq
        	 \dfrac{q}{h^{q-1}}\left( \left\|X\right\|_{\infty}^{q-1}+\left\|Y\right\|_{\infty}^{q-1}\right)  \left\| \left| \delta_{x}X\right|-\left| \delta_{x}Y\right|\right\|_{2} . 
        	\end{equation*}
        	But
        	\begin{eqnarray*}
        	&&\left\| \left| \delta_{x}X\right|-\left| \delta_{x}Y\right|\right\|_{2}^{2}\\
        	&=& h \sum\limits_{j=1}^{N}{\left(\left|\dfrac{X_{j+1}-X_{j-1}}{2h} \right|- \left|\dfrac{Y_{j+1}-Y_{j-1}}{2h} \right| \right)^{2}}\\
        	&\leq& \dfrac{1}{4h}\sum\limits_{j=1}^{N}{\left((X_{j+1}-X_{j-1})-(Y_{j+1}-Y_{j-1})\right)^{2}}\\
        	&=& \dfrac{1}{4h}\sum\limits_{j=1}^{N}{\left((X_{j+1}-Y_{j+1})-(X_{j-1}-Y_{j-1})\right)^{2}}\\
        	&=&\dfrac{1}{4h}\left(\sum\limits_{j=1}^{N}{(X_{j+1}-Y_{j+1})^{2}}+\sum\limits_{j=1}^{N}{(X_{j-1}-Y_{j-1})^{2}}-2\sum\limits_{j=1}^{N}{(X_{j+1}-Y_{j+1})(X_{j-1}-Y_{j-1})}\right)
           \end{eqnarray*}
           Using that $-2\alpha\beta \leq \alpha^{2}+\beta^{2}$ we get
           	\begin{eqnarray*}
           	\left\| \left| \delta_{x}X\right|-\left| \delta_{x}Y\right|\right\|_{2}^{2}
        	&\leq& \dfrac{1}{2h}\sum\limits_{j=1}^{N}{\left(X_{j+1}-Y_{j+1}\right)^{2}}+\dfrac{1}{2h} \sum\limits_{j=1}^{N}{\left(X_{j-1}-Y_{j-1}\right)^{2}}\\
        	&\leq& \dfrac{1}{h^{2}}\sum\limits_{j=1}^{N}{h\left(X_{j}-Y_{j}\right)^{2}}\\
        	&=& \dfrac{1}{h^{2}} \left\|X-Y\right\|_{2}^{2}. 
        	\end{eqnarray*}
        	which implies that 
        	\begin{equation}
        	\left\|\left| \delta_{x}X\right| ^{q}-\left| \delta_{x}Y\right| ^{q}\right\|_{2}\leq  \dfrac{q}{h^{q}}\left( \left\|X\right\|_{\infty}^{q-1}+\left\|Y\right\|_{\infty}^{q-1}\right)  \left\|X-Y\right\|_{2}.
        	\label{3star}
        	\end{equation}
        	Finally \eqref{1star}, \eqref{delta}, \eqref{2star} and \eqref{3star} implies 
        	\begin{equation*}
        	\left\|f(X)-f(Y) \right\|_{2}\leq
        	 \left(
        	 \dfrac{4}{h}+p \left(  { \left\|X\right\|_{\infty}^{p-1}+\left\|Y\right\|_{\infty} ^{p-1}}\right)+ \dfrac{qb_{\infty}}{h^{q}}\left( \left\|X\right\|_{\infty}^{q-1}+\left\|Y\right\|_{\infty}^{q-1}\right)
        	   \right)
        	     \left\|X-Y\right\|_{2}.
        	\end{equation*}
            Using that $X,\ Y\in B_{(X^{*},r)}$ and $\left\|X\right\|_{\infty}\leq\dfrac{1}{\sqrt{h}}\left\|X\right\|_{2}$ we get
               \begin{eqnarray*}
            K_{h}&:=&\dfrac{4}{h}+p \left(  { \left\|X\right\|_{\infty}^{p-1}+\left\|Y\right\|_{\infty} ^{p-1}}\right)+ \dfrac{qb_{\infty}}{h^{q}}\left( \left\|X\right\|_{\infty}^{q-1}+\left\|Y\right\|_{\infty}^{q-1}\right)\\
            &\leq& \dfrac{4}{h}+\left(\dfrac{p}{h^{\frac{p-1}{2}}}+\dfrac{qb_{\infty}}{h^{\frac{3q-1}{2}}}\right) \left(  { \left\|X\right\|_{2}^{p-1}+\left\|Y\right\|_{2} ^{p-1}}\right)\\
            &\leq& \dfrac{4}{h}+\left(\dfrac{p}{h^{\frac{p-1}{2}}}+\dfrac{qb_{\infty}}{h^{\frac{3q-1}{2}}}\right) \left( 2  \left\|X^{*}\right\|_{2}+r \right)^{p-1}\\
            &:=& L_{h}
            \end{eqnarray*}
            hence 
        		\begin{equation*}
        		\left\|f(X)-f(Y) \right\|_{2}\leq L_{h} \left\|X-Y\right\|_{2}.
        		\end{equation*}
        	which implies that $f$ is a locally lipschitz function. Finally using the Cauchy-Lipschitz theorem we get existence and unicity of the maximal solution of \eqref{Eh}.
        \end{proof}
	\section{Blow up of the semidiscrete solution}
	\noindent Next, we suppose that $p>1$ and $1<q \leq \dfrac{2p}{p+1}$. To prove the blow-up of the semidiscrete solution we need the next lemmas. The first lemma reveals that the solution $U_{h}$ is nondecreasing in time.
		\begin{lem}
		 Let $U_{h}$ be the nonnegative solution of \eqref{Eh} and we suppose that the initial data satisfies $\dfrac{dU_{h}}{dt}(0):=\delta^{2}_{x}U_{h}(0)+U^{p}_{h}(0)-\left|\delta_{x}U_{h}(0)\right|^{q}\geq 0$. Then we have $\dfrac{dU_{h}}{dt}(t)\geq 0$ for all $t\in (0,T^*_{h})$.
		 \label{lemcroissance} 
		\end{lem}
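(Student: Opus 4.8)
The plan is to differentiate the system \eqref{Eh} in time and apply a discrete maximum principle to $V_{h}:=\dfrac{dU_{h}}{dt}$, exactly in the spirit of the proof of the positivity lemma. Since $p,q>1$, the maps $s\mapsto|s|^{p}$ and $s\mapsto|s|^{q}$ are $C^{1}$, so $U_{h}\in C^{2}$ and, using $U_{h}\geq 0$ (hence $|u_{j}|^{p}=u_{j}^{p}$), the components $v_{j}=u_{j}'$ satisfy, for $1\leq j\leq N$ and $t\in(0,T^{*}_{h})$,
\begin{equation*}
v_{j}'(t)=\delta_{x}^{2}v_{j}(t)-c_{j}(t)\,\delta_{x}v_{j}(t)+p\,u_{j}(t)^{p-1}v_{j}(t),\qquad v_{0}(t)=v_{N+1}(t)=0,
\end{equation*}
where $c_{j}(t)=b_{j}\,q\,|\delta_{x}u_{j}(t)|^{q-1}\operatorname{sign}(\delta_{x}u_{j}(t))$ and $v_{j}(0)\geq 0$ by hypothesis.

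Next I would fix $T_{h}<T^{*}_{h}$ and set $M_{T_h}=\max\limits_{\underset{0\leq j\leq N+1}{0\leq t\leq T_h}}u_{j}(t)$ (positive since $u_{0}$ is nonconstant and nonnegative). The key observation is that the mesh condition $h\leq\big(\tfrac{2}{b_{\infty}q}M_{T_h}^{-q+1}\big)^{1/(2-q)}$ — well defined because $q\leq\tfrac{2p}{p+1}<2$ — is precisely what gives, via $|\delta_{x}u_{j}|\leq M_{T_h}/h$, the bound $|c_{j}(t)|\leq 2/h$ on $[0,T_{h}]$. Rewriting the convection–diffusion part,
\begin{equation*}
\delta_{x}^{2}v_{j}-c_{j}\delta_{x}v_{j}=\Big(\tfrac{1}{h^{2}}-\tfrac{c_{j}}{2h}\Big)v_{j+1}+\Big(\tfrac{1}{h^{2}}+\tfrac{c_{j}}{2h}\Big)v_{j-1}-\tfrac{2}{h^{2}}v_{j},
\end{equation*}
this bound makes both weights $\tfrac{1}{h^{2}}\pm\tfrac{c_{j}}{2h}$ nonnegative, i.e. the discrete operator is of monotone type. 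This is the heart of the matter: without the mesh restriction the centered term $\delta_{x}v_{j}$ would destroy the maximum principle, and it is exactly this step that the definition of $h$ is tailored to.

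Then I would run the classical argument with an exponential shift. Put $Z_{h}(t)=e^{\lambda t}V_{h}(t)$ with $\lambda<0$ chosen so that $\lambda+p\,u_{j}(t)^{p-1}<0$ for all $j$ and all $t\in[0,T_{h}]$ (possible since $u_{j}\leq M_{T_h}$); then $z_{j}'=\delta_{x}^{2}z_{j}-c_{j}\delta_{x}z_{j}+(\lambda+p\,u_{j}^{p-1})z_{j}$. Suppose for contradiction that $m:=\min\limits_{\underset{0\leq j\leq N+1}{0\leq t\leq T_h}}z_{j}(t)<0$; by continuity it is attained at some $(j_{0},t_{0})$. Since $z_{j}(0)=v_{j}(0)\geq 0$ we get $t_{0}>0$, and since $z_{0}\equiv z_{N+1}\equiv 0$ we get $j_{0}\in\{1,\dots,N\}$. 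At $(j_{0},t_{0})$ one has $z_{j_{0}}'(t_{0})\leq 0$ (interior time minimum, or a left endpoint derivative if $t_{0}=T_{h}$); moreover $z_{j_{0}\pm 1}(t_{0})\geq z_{j_{0}}(t_{0})$ together with the nonnegativity of the two weights gives $\delta_{x}^{2}z_{j_{0}}(t_{0})-c_{j_{0}}\delta_{x}z_{j_{0}}(t_{0})\geq\tfrac{2}{h^{2}}z_{j_{0}}(t_{0})-\tfrac{2}{h^{2}}z_{j_{0}}(t_{0})=0$; and $(\lambda+p\,u_{j_{0}}^{p-1})z_{j_{0}}(t_{0})>0$ as a product of two negative numbers. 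Substituting into the equation for $z_{j_{0}}$ yields $z_{j_{0}}'(t_{0})>0$, contradicting $z_{j_{0}}'(t_{0})\leq 0$. Hence $z_{j}\geq 0$, so $v_{j}\geq 0$, on $[0,T_{h}]$; and since $T_{h}<T^{*}_{h}$ is arbitrary, $\dfrac{dU_{h}}{dt}(t)\geq 0$ for all $t\in(0,T^{*}_{h})$. The only minor technicalities are the differentiability of $|\cdot|^{q}$ at $0$ (harmless for $q>1$, its derivative $q|s|^{q-1}\operatorname{sign}(s)$ being continuous and vanishing there) and the one-sided derivative at $t_{0}=T_{h}$; the real obstacle — already overcome by the choice of $h$ — is keeping the first-order difference under control so that the discrete comparison principle can be invoked.
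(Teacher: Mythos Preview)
Your proof is correct and follows a genuinely different route from the paper's. The paper also differentiates \eqref{Eh} in time to obtain the linear system for $v_{j}=u_{j}'$, but then runs an $L^{2}$-energy argument: it multiplies by the negative part $v_{j}^{-}=\max(0,-v_{j})$, sums in $j$, and after a rather intricate rearrangement of the cross terms $v_{j\pm 1}^{\pm}v_{j}^{-}$ arrives at an inequality of the form $\frac{d}{dt}\|V_{h}^{-}\|_{2}^{2}\leq C_{h}\|V_{h}^{-}\|_{2}^{2}$, concluding by Gronwall; the mesh condition is used there to make the coefficient in front of $\sum(v_{j+1}^{+}v_{j}^{-}+v_{j}^{+}v_{j+1}^{-})$ nonpositive. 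The paper also wraps this in a local-plus-continuation structure (first on a short interval $[0,t_{h}]$, then extend). You instead rewrite the convection--diffusion part so that the off-diagonal weights are $\tfrac{1}{h^{2}}\pm\tfrac{c_{j}}{2h}$, observe that the \emph{same} mesh restriction makes both nonnegative, and then apply a pointwise minimum argument after an exponential shift --- directly on $[0,T_{h}]$. Your approach is shorter and conceptually cleaner: it makes explicit that the role of the mesh condition is precisely to restore the discrete maximum principle that the centered first-order term would otherwise destroy. The paper's $L^{2}$ method buys a bit more robustness (it does not need strict monotonicity of the stencil, only sign control of certain aggregated terms), but for the problem at hand both arguments hinge on the identical bound $|c_{j}|\leq 2/h$, exploited in two different ways.
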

			\begin{proof}
Let $T_h<T^*_{h}$. In the first step of the proof we shall prove that for an initial data $u_{0}$ satisfiying $\delta^{2}_{x}U_{h}(0)+U^{p}_{h}(0)-\left|\delta_{x}U_{h}(0)\right|^{q}\geq 0$, there exists $0<t_{h}<T_h$ such that $\dfrac{dU_{h}}{dt}(t)\geq 0$ for all $t\in [0,t_{h}].$\\
Let $V_{h}(t)=\dfrac{dU_{h}}{dt}(t)$, then for all $j=1,...,N$ we have
			\begin{eqnarray}
	\dfrac{dv_{j}}{dt}(t)&=&\dfrac{v_{j+1}(t)-2v_{j}(t)+v_{j-1}(t)}{h^{2}}+pv_{j}(t)u_{j}^{p-1}(t)\\ \nonumber 
	&&-b_jq\left|\dfrac{u_{j+1}(t)-u_{j-1}(t)}{2h} \right|^{q-2}\dfrac{u_{j+1}(t)-u_{j-1}(t)}{2h}  \dfrac{v_{j+1}(t)-v_{j-1}(t)}{2h} 
			\label{houd}
			\end{eqnarray} 
			Note that $\left|\delta_{x}U_{h}\right|^{q-2}\delta_{x}U_{h}=0$ in case $\left|\delta_{x}U_{h}\right|=0$, this presents no problem since $q>1$.\\
			Now we multiply \eqref{houd} by $v^{-}_{j}=\max(0,-v_{j})$ and we use that 
			\begin{equation*}
		v_{j}^{+}=\max(0,v_{j}),\ v_{j}=v_{j}^{+}-v_{j}^{-} \text { and } v_{j}^{+}v_{j}^{-}=0
			\end{equation*} 
we get
\begin{eqnarray*}
	&&\dfrac{dv_{j}}{dt}(t)v_{j}^{-}(t)=\dfrac{v_{j+1}(t)-v_{j}(t)}{h^{2}}v_{j}^{-}(t)-\dfrac{v_{j}(t)-v_{j-1}(t)}{h^{2}}v_{j}^{-}(t)+pv_{j}(t)v_{j}^{-}(t)u_{j}^{p-1}(t)\\
	&&\ \ \ \ \ \ \ \ \ \ \ \ \ \ -b_jq\left|\dfrac{u_{j+1}(t)-u_{j-1}(t)}{2h} \right|^{q-2} \dfrac{u_{j+1}(t)-u_{j-1}(t)}{2h} \dfrac{v_{j+1}(t)-v_{j-1}(t)}{2h} v_{j}^{-}(t)\\
	&&\Rightarrow
 \dfrac{-1}{2}\dfrac{d}{dt}\left(v_{j}^{-}(t)\right)^{2}=\dfrac{v_{j+1}^{+}(t)}{h^{2}}v_{j}^{-}(t)-\dfrac{v_{j+1}^{-}(t)-v_{j}^{-}(t)}{h^{2}}v_{j}^{-}(t)+\dfrac{v_{j-1}^{+}(t)}{h^{2}}v_{j}^{-}(t)+\dfrac{v_{j}^{-}(t)-v_{j-1}^{-}(t)}{h^{2}}v_{j}^{-}(t)\\
	&&\ \ \ \ \ \ \ \ \ \ \ \ \ \  -p\left(v_{j}^{-}(t)\right)^{2}u_{j}^{p-1}(t)-b_jq\left|\dfrac{u_{j+1}(t)-u_{j-1}(t)}{2h} \right|^{q-2} \dfrac{u_{j+1}(t)-u_{j-1}(t)}{2h} \dfrac{v_{j+1}(t)-v_{j-1}(t)}{2h} v_{j}^{-}(t)\\
\end{eqnarray*}
We sum for $j=1,...,N$ and we use that 
\begin{equation*}
\sum\limits_{j=1}^{N}{v_{j-1}^{+}(t)v_{j}^{-}(t)}=\sum\limits_{j=1}^{N}{v_{j}^{+}(t)v_{j+1}^{-}(t)}
\end{equation*}
we get 
\begin{eqnarray*}
	&& \dfrac{-1}{2}\dfrac{d}{dt}\sum\limits_{j=1}^{N}\left(v_{j}^{-}\right)^{2}(t)=\sum\limits_{j=1}^{N}\dfrac{v_{j+1}^{+}(t)v_{j}^{-}(t)+v_{j}^{+}(t)v_{j+1}^{-}(t)}{h^{2}}-\sum\limits_{j=1}^{N}\dfrac{v_{j+1}^{-}(t)-v_{j}^{-}(t)}{h^{2}}v_{j}^{-}(t)\\
	&&\ \ \ \ \ \ \ \ \ \ \ \ \ \ \ \ \ \ \ \ \ \ \ \ \ \ +\sum\limits_{j=1}^{N}\dfrac{v_{j}^{-}(t)-v_{j-1}^{-}(t)}{h^{2}}v_{j}^{-}(t) -p\sum\limits_{j=1}^{N}\left(v_{j}^{-}\right)^{2}(t)u_{j}^{p-1}(t)\\
	&&\ \ \ \ \ \ \ \ \ \ \ \ \ \ \ \ \ \ \ \ \ \ \ \ \ \ \ -q\sum\limits_{j=1}^{N}b_j\left|\dfrac{u_{j+1}(t)-u_{j-1}(t)}{2h} \right|^{q-2} \dfrac{u_{j+1}(t)-u_{j-1}(t)}{2h} \dfrac{v_{j+1}(t)-v_{j-1}(t)}{2h} v_{j}^{-}(t)\\
	&&\Rightarrow \dfrac{1}{2}\dfrac{d}{dt}\sum\limits_{j=1}^{N}\left(v_{j}^{-}\right)^{2}(t)\leq -\sum\limits_{j=1}^{N}\dfrac{v_{j+1}^{+}(t)v_{j}^{-}(t)+v_{j}^{+}(t)v_{j+1}^{-}(t)}{h^{2}}-\sum\limits_{j=1}^{N}\left(\dfrac{v_{j+1}^{-}(t)-v_{j}^{-}(t)}{h}\right)^{2}-\left(\dfrac{v_{1}^{-}(t)}{h}\right)^{2}\\ 
	&&\ \ \ \ \ \ \ \ \ \ \ \ \ \ \ \ \ \ \ \ \ \ \ \ \  +p\sum\limits_{j=1}^{N}\left(v_{j}^{-}\right)^{2}(t)u_{j}^{p-1}(t) +q\sum\limits_{j=1}^{N}b_j\left|\dfrac{u_{j+1}(t)-u_{j-1}(t)}{2h} \right|^{q-1} \left| \dfrac{v_{j+1}(t)-v_{j-1}(t)}{2h} \right|v_{j}^{-}(t)\\
\end{eqnarray*}
Now, we use that $M_{h}:=\max\limits_{\underset{0\leq j \leq N+1}{0\leq t\leq t_{h}}}\left|  u_{j}(t)\right| $ is bounded before blow up, we can write that 
\begin{equation*}
u_{j}^{p-1}(t)\leq M_{h}^{p-1} \text{ and } \left|\dfrac{u_{j+1}(t)-u_{j-1}(t)}{2h} \right|^{q-1}\leq \left(\dfrac{M_{h}}{h}\right)^{q-1} 
\end{equation*}
and so
\begin{eqnarray*}
&&\dfrac{1}{2}\dfrac{d}{dt}\sum\limits_{j=1}^{N}\left(v_{j}^{-}(t)\right)^{2}\leq -\sum\limits_{j=1}^{N}\dfrac{v_{j+1}^{+}(t)v_{j}^{-}(t)+v_{j}^{+}(t)v_{j+1}^{-}(t)}{h^{2}}-\sum\limits_{j=1}^{N}\left(\dfrac{v_{j+1}^{-}(t)-v_{j}^{-}(t)}{h}\right)^{2}-\left(\dfrac{v_{1}^{-}(t)}{h}\right)^{2}\\ 
&&\ \ \ \ \ \ \ \ \ \ \ \ \ \ \ \ \ \ \ \ \ \ \ \ \ \ \ +pM_{h}^{p-1}\sum\limits_{j=1}^{N}\left(v_{j}^{-}\right)^{2}(t) +b_{\infty}q\left(\dfrac{M_{h}}{h}\right)^{q-1}\sum\limits_{j=1}^{N} \left| \dfrac{v_{j+1}(t)-v_{j-1}(t)}{2h} \right|v_{j}^{-}(t)
\end{eqnarray*}
Then we get
\begin{eqnarray}
	\dfrac{1}{2}\dfrac{d}{dt}\sum\limits_{j=1}^{N}\left(v_{j}^{-}(t)\right)^{2}&\leq& -\sum\limits_{j=1}^{N}\dfrac{v_{j+1}^{+}(t)v_{j}^{-}(t)+v_{j}^{+}(t)v_{j+1}^{-}(t)}{h^{2}}+pM_{h}^{p-1}\sum\limits_{j=1}^{N}\left(v_{j}^{-}(t)\right)^{2}\\ \nonumber  &&+b_{\infty}q\left(\dfrac{M_{h}}{h}\right)^{q-1}\sum\limits_{j=1}^{N} \left| \dfrac{v_{j+1}(t)-v_{j-1}(t)}{2h} \right|v_{j}^{-}(t)-\dfrac{1}{h^{2}}\left( v_{1}^{-}(t)\right) ^{2}
	\label{6etoile}	
\end{eqnarray}
But 
\begin{eqnarray*}
&&\sum\limits_{j=1}^{N} \left| \dfrac{v_{j+1}(t)-v_{j-1}(t)}{2h} \right|v_{j}^{-}(t)\\
&\leq & \sum\limits_{j=1}^{N} \left| \dfrac{v_{j+1}^{+}(t)-v_{j-1}^{+}(t)}{2h} \right|v_{j}^{-}(t)+\sum\limits_{j=1}^{N} \left| \dfrac{v_{j+1}^{-}(t)-v_{j-1}^{-}(t)}{2h} \right|v_{j}^{-}(t)\\
&\leq& \dfrac{1}{2h}\left(\sum\limits_{j=1}^{N}v_{j+1}^{+}(t)v_{j}^{-}(t)+\sum\limits_{j=1}^{N}v_{j-1}^{+}(t)v_{j}^{-} (t)\right)+ \sum\limits_{j=1}^{N} \left| \dfrac{v_{j+1}^{-}(t)-v_{j}^{-}(t)}{2h} \right|v_{j}^{-}(t)+\sum\limits_{j=1}^{N} \left| \dfrac{v_{j}^{-}(t)-v_{j-1}^{-}(t)}{2h} \right|v_{j}^{-}(t)\\
&=&\dfrac{1}{2h}\left(\sum\limits_{j=1}^{N}v_{j+1}^{+}(t)v_{j}^{-}(t)+\sum\limits_{j=1}^{N}v_{j}^{+}(t)v_{j+1}^{-}(t) \right)+ \sum\limits_{j=1}^{N} \left| \dfrac{\left( v_{j+1}^{-}\right)^{2}(t) -\left( v_{j}^{-}\right) ^{2}(t)}{2h} \right|+\dfrac{1}{2}\left( \dfrac{v_{1}^{-}(t)}{h}\right)^{2} \\
&\leq& \dfrac{1}{2h}\left(\sum\limits_{j=1}^{N}v_{j+1}^{+}(t)v_{j}^{-}(t)+\sum\limits_{j=1}^{N}v_{j}^{+}(t)v_{j+1}^{-}(t) \right)+\dfrac{1}{h}\sum\limits_{j=1}^{N}\left(v_{j}^{-}\right)^{2}(t)- \dfrac{1}{2h}\left(v_{1}^{-}\right)^{2}(t)+\dfrac{1}{2}\left( \dfrac{v_{1}^{-}(t)}{h}\right)^{2}.
\end{eqnarray*}
Then \eqref{6etoile} implies
\begin{eqnarray*}
\dfrac{1}{2}\dfrac{d}{dt}\sum\limits_{j=1}^{N}\left(v_{j}^{-}(t)\right)^{2}&\leq& \left( -\dfrac{1}{h^{2}} +b_{\infty}q\left(\dfrac{M_{h}}{h}\right)^{q-1}\dfrac{1}{2h}\right) \sum\limits_{j=1}^{N}\left(v_{j+1}^{+}(t)v_{j}^{-}(t)+v_{j}^{+}(t)v_{j+1}^{-}(t)\right)\\
&&\ \ \ \ \ \ \ \ +\left(\dfrac{b_{\infty}q}{h}\left(\dfrac{M_{h}}{h}\right)^{q-1}+pM_{h}^{p-1}\right)\sum\limits_{j=1}^{N}\left(v_{j}^{-}(t)\right)^{2}\\
 &&\ \ \ \ \ \ \ \ \ \ \ \ +\left(- \dfrac{1}{h^{2}}+\dfrac{b_{\infty}q}{2h^{2}}\left(\dfrac{M_{h}}{h}\right)^{q-1}-\dfrac{b_{\infty}q}{2h}\left(\dfrac{M_{h}}{h}\right)^{q-1}\right) \left(v_{1}^{-}(t) \right)^{2} 
 \end{eqnarray*}
 and hence
 	\begin{eqnarray*}
 \dfrac{d}{dt}\sum\limits_{j=1}^{N}\left(v_{j}^{-}(t)\right)^{2}		
 &\leq& 2\left( -\dfrac{1}{h^{2}}+b_{\infty}q\left(\dfrac{M_{h}}{h}\right)^{q-1}\dfrac{1}{2h}\right) \sum\limits_{j=1}^{N}\left(v_{j+1}^{+}(t)v_{j}^{-}(t)+v_{j}^{+}(t)v_{j+1}^{-}(t)\right)\\
 &&\ \ \ \ \ \ \ \ +2\left(\dfrac{b_{\infty}q}{h}\left(\dfrac{M_{h}}{h}\right)^{q-1}+\dfrac{b_{\infty}q}{2h^{2}}\left(\dfrac{M_{h}}{h}\right)^{q-1}+pM_{h}^{p-1}\right)\sum\limits_{j=1}^{N}\left(v_{j}^{-}(t)\right)^{2}
\end{eqnarray*}
Let $M_{0}:=\max\limits_{j=1,...,N+1}u_{j}^{0}$ and we suppose that $h\ll \left(\dfrac{2}{b_{\infty}q}M_{0}^{-q+1}\right)^{\frac{1}{2-q}}$ is sufficiently small.\\
Since $u_{j}$ is a continuous function in $[0,T_h]$ then there exists $0<t_{h}\leq T_h$ such that $$h<\left(\dfrac{2}{b_{\infty}q}M_{h}^{-q+1}\right)^{\frac{1}{2-q}}.$$\\
Then 
$-\dfrac{1}{h^{2}}+b_{\infty}q\left(\dfrac{M_{h}}{h}\right)^{q-1}\dfrac{1}{2h}<0.$
Hence we get
\begin{equation*}
\dfrac{d}{dt}\left\|V_{h}^{-}(t) \right\|_{2}^{2}\leq 2\left(\dfrac{b_{\infty}q}{h}\left(\dfrac{M_{h}}{h}\right)^{q-1}+\dfrac{b_{\infty}q}{2h^{2}}\left(\dfrac{M_{h}}{h}\right)^{q-1}+pM_{h}^{p-1}\right)\left\|V_{h}^{-}(t) \right\|_{2}^{2}	
\end{equation*}
Now, integrating over $[0,t]$, using the Gronwall inegality and the hypothesis\\ $\left\|V_{h} (0)\right\|:=\left\|\dfrac{dU_{h}}{dt} (0)\right\|\geq 0 $, 	we get that $\left\|V_{h}^{-}(t) \right\|_{2}=0$ and this proves that $V_{h}\geq 0$ in $[0,t_{h}]$.\\
Let $t^{*}_{h}$ the greatest value of $t_{h}\in [0,T_h]$ such that
$\dfrac{dU_{h}}{dt}(t)\geq 0,\  \forall t\in[0,t_{h}^{*}].$
 In the next step, we shall prove that $t_{h}^{*}=T_h$. We suppose that $t_{h}^{*}<T_h$, we have $\dfrac{dU_{h}}{dt}(t_{h}^{*})\geq 0$. We use the same argument for the initial data $u(t_{h}^{*})$ we get
\begin{equation}
\exists \tau>0 \ \text{such that} \ [t_{h}^{*},t_{h}^{*}+\tau]\subset [0,T_h]\ \text{and }\ \dfrac{dU_{h}}{dt}\geq 0 \ \text{in }\ [t_{h}^{*},t_{h}^{*}+\tau]
\end{equation} 
which contradicts the definition of $t_{h}^{*}$.
			\end{proof}
			\begin{lem}
			 For all $t\in[0,T^*_{h})$ we define the function
			 \begin{equation*}
			 J(t)=\dfrac{1}{2}\sum\limits_{j=1}^{N+1}{\dfrac{(u_{j}(t)-u_{j-1}(t))^{2}}{h}}-\dfrac{1}{p+1}\sum\limits_{j=1}^{N+1}{hu^{p+1}_{j}(t)}.
			 \end{equation*}
			 Then $J$ is a nonincreasing function.
			\end{lem}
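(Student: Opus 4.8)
The plan is to differentiate $J$ term by term and show that $J'(t)\le 0$. Since $U_h(t)\ge 0$ by the positivity of the semidiscrete solution, we have $|u_j|^p=u_j^p$ and $\frac{d}{dt}\big(\frac{1}{p+1}u_j^{p+1}\big)=u_j^p\,\frac{du_j}{dt}$, so that
\[
J'(t)=\frac1h\sum_{j=1}^{N+1}(u_j-u_{j-1})\Big(\frac{du_j}{dt}-\frac{du_{j-1}}{dt}\Big)-h\sum_{j=1}^{N+1}u_j^p\,\frac{du_j}{dt}.
\]
Because $u_0(t)=u_{N+1}(t)=0$ for all $t$, we also have $\frac{du_0}{dt}=\frac{du_{N+1}}{dt}=0$, so both sums effectively run only over $1\le j\le N$.

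The key step is a discrete summation by parts (Abel transform) in the first sum. Setting $w_j:=u_j-u_{j-1}$ and using $\frac{du_0}{dt}=\frac{du_{N+1}}{dt}=0$ to kill the boundary contributions, one gets
\[
\sum_{j=1}^{N+1}w_j\Big(\frac{du_j}{dt}-\frac{du_{j-1}}{dt}\Big)=\sum_{j=1}^{N}(w_j-w_{j+1})\,\frac{du_j}{dt}=-h^2\sum_{j=1}^{N}\big(\delta_x^2u_j\big)\frac{du_j}{dt},
\]
since $w_j-w_{j+1}=2u_j-u_{j-1}-u_{j+1}=-h^2\delta_x^2u_j$. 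Hence
\[
J'(t)=-h\sum_{j=1}^{N}\Big(\delta_x^2u_j+u_j^p\Big)\frac{du_j}{dt}.
\]

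It then remains to substitute the semidiscrete equation \eqref{Eh}, namely $\delta_x^2u_j+u_j^p=\frac{du_j}{dt}+b_j|\delta_xu_j|^q$, which yields
\[
J'(t)=-h\sum_{j=1}^{N}\Big(\frac{du_j}{dt}\Big)^2-h\sum_{j=1}^{N}b_j\,|\delta_xu_j|^q\,\frac{du_j}{dt}.
\]
The first sum is nonnegative. For the second, I invoke Lemma~\ref{lemcroissance}, which (under the standing assumption $\frac{dU_h}{dt}(0)\ge0$) gives $\frac{du_j}{dt}(t)\ge0$ for all $t\in(0,T^*_h)$; together with $b_j\ge0$ and $q>1$, this shows the second sum is also nonnegative. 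Therefore $J'(t)\le 0$, i.e.\ $J$ is nonincreasing.

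The only slightly delicate point is the summation by parts and the careful tracking of the boundary terms at $j=0$ and $j=N+1$; everything else is a direct substitution. I also stress that the time monotonicity of $U_h$, hence the hypothesis $\frac{dU_h}{dt}(0)\ge0$ carried over from Lemma~\ref{lemcroissance}, is genuinely needed here: without it the damping contribution $-h\sum_{j}b_j|\delta_xu_j|^q\frac{du_j}{dt}$ would have no definite sign.
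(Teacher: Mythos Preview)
Your proof is correct and follows essentially the same approach as the paper: both arrive at the identity
\[
J'(t)=-h\sum_{j=1}^{N}\Big(\dfrac{du_j}{dt}\Big)^{2}-h\sum_{j=1}^{N}b_j\,|\delta_xu_j|^{q}\,\dfrac{du_j}{dt}
\]
and then invoke Lemma~\ref{lemcroissance}. The only cosmetic difference is that the paper multiplies the ODE by $h\,\dfrac{du_j}{dt}$ and sums, whereas you differentiate $J$ first and then substitute the ODE; the summation-by-parts computation and the use of the boundary conditions are identical.
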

			\begin{proof}
			We multiply the first equation of \eqref{Eh} by $h\dfrac{du_{j}}{dt}(t)$ we get
			\begin{eqnarray*}
			&&h\left(\dfrac{du_{j}}{dt}(t)\right)^{2}\\
			&=& 
		    h\dfrac{u_{j+1}(t)-2u_{j}(t)+u_{j-1}(t)}{h^{2}}\dfrac{du_{j}}{dt}(t)+h(u_{j}(t))^{p}\dfrac{du_{j}}{dt}(t)-b_jh\left|\dfrac{u_{j+1}(t)-u_{j-1}(t)}{2h}\right|^{q}\dfrac{du_{j}}{dt}(t)  \\ 
		   	&=&	\dfrac{u_{j+1}(t)-2u_{j}(t)+u_{j-1}(t)}{h}\dfrac{du_{j}}{dt}(t)+\dfrac{h}{p+1}\dfrac{d}{dt}(u_{j}^{p+1}(t))-b_jh\left|\dfrac{u_{j+1}(t)-u_{j-1}(t)}{2h}\right|^{q}\dfrac{du_{j}}{dt} (t)\\
			\end{eqnarray*}
			We sum for $j=1,...,N$ we get
				\begin{eqnarray}
				\sum\limits_{j=1}^{N}{h\left(\dfrac{du_{j}}{dt}(t)\right)^{2}}&=& 
				\sum\limits_{j=1}^{N}{\dfrac{u_{j+1}(t)-u_{j}(t)}{h}\dfrac{du_{j}}{dt}(t)}-\sum\limits_{j=1}^{N}{\dfrac{u_{j}(t)-u_{j-1}(t)}{h}\dfrac{du_{j}}{dt}}(t) \label{1}\\
				&&\ \ +\sum\limits_{j=1}^{N}{\dfrac{h}{p+1}\dfrac{d}{dt}(u_{j}^{p+1})(t)}-\sum\limits_{j=1}^{N}{b_jh\left|\dfrac{u_{j+1}(t)-u_{j-1}(t)}{2h}\right|^{q}\dfrac{du_{j}}{dt}(t)}. \nonumber	
				\end{eqnarray}
				But
				\begin{eqnarray*}
			&&	\sum\limits_{j=1}^{N}{\dfrac{u_{j+1}(t)-u_{j}(t)}{h}\dfrac{du_{j}}{dt}(t)}-\sum\limits_{j=1}^{N}{\dfrac{u_{j}(t)-u_{j-1}(t)}{h}\dfrac{du_{j}}{dt}(t)}\\
				&=& \sum\limits_{j=2}^{N+1}{\dfrac{u_{j}(t)-u_{j-1}(t)}{h}\dfrac{du_{j-1}}{dt}(t)}-\sum\limits_{j=1}^{N}{\dfrac{u_{j}(t)-u_{j-1}(t)}{h}\dfrac{du_{j}}{dt}(t)} \\
				&=& -\sum\limits_{j=1}^{N}{\dfrac{u_{j}(t)-u_{j-1}(t)}{h}\dfrac{d}{dt}\left(u_{j}(t)-u_{j-1}(t)\right)}-\dfrac{u_{N}(t)}{h}\dfrac{du_{N}}{dt}(t)\\
				&=& -\dfrac{1}{2}\sum\limits_{j=1}^{N}{\dfrac{1}{h}\dfrac{d}{dt}(u_{j}(t)-u_{j-1}(t))^{2}}-\dfrac{1}{2h}\dfrac{d}{dt}(u^2_{N}(t))\\
				&=& -\dfrac{1}{2}\sum\limits_{j=1}^{N+1}{\dfrac{1}{h}\dfrac{d}{dt}(u_{j}(t)-u_{j-1}(t))^{2}}
				\end{eqnarray*}	
				Then \eqref{1} implies
			\begin{eqnarray*}
		&&	\sum\limits_{j=1}^{N}{h\left(\dfrac{du_{j}}{dt}(t)\right)^{2}}\\
		&=& -\dfrac{1}{2}\sum\limits_{j=1}^{N+1}{\dfrac{1}{h}\dfrac{d}{dt}(u_{j}(t)-u_{j-1}(t))^{2}}	 +\sum\limits_{j=1}^{N+1}{\dfrac{h}{p+1}\dfrac{d}{dt}(u_{j}^{p+1})(t)}-\sum\limits_{j=1}^{N}{b_jh\left|\dfrac{u_{j+1}(t)-u_{j-1}(t)}{2h}\right|^{q}\dfrac{du_{j}}{dt}(t)}\\
			&=& -\dfrac{d}{dt}J(t)-\sum\limits_{j=1}^{N}{hb_j\left|\dfrac{u_{j+1}(t)-u_{j-1}(t)}{2h}\right|^{q}\dfrac{du_{j}}{dt}(t)}
			\end{eqnarray*}
			which implies that: for all $t\in [0,T^*_h),$
			\begin{equation*}
			\dfrac{d}{dt}J(t)=-\left( \sum\limits_{j=1}^{N}{h\left(\dfrac{du_{j}}{dt}(t)\right)^{2}}+\sum\limits_{j=1}^{N}{hb_j\left|\dfrac{u_{j+1}(t)-u_{j-1}(t)}{2h}\right|^{q}\dfrac{du_{j}}{dt}}(t)\right) .
			\end{equation*}
			Using lemma \ref{lemcroissance} we can deduce that $	\dfrac{d}{dt}J(t)\leq 0$ which implies that $J$ is a nonincreasing function.
			\end{proof}
			In the next theorem and under some assumptions, we show that the semidiscrete solution blows up in a finite time.
			\begin{th1}
				Let $U_{h}$ be the nonnegative solution of \eqref{Eh} and we suppose that $J(0)<0$.\\
				We suppose also that:\\ $b_{\infty}<\dfrac{p-1}{\left(2^p(p+1)\right)^{\frac{1}{p+1}}}=\dfrac{p-1}{2}\bigg(\dfrac{2}{p+1}\bigg)^{1/p+1}$ if $q=\dfrac{2p}{p+1}$ and $\left\|U_{h}(0) \right\|_{p+1}>\left(\dfrac{c(p+1)}{p-1}\right)^{\frac{1}{\beta}}$ if $q<\dfrac{2p}{p+1}$ where $c=b_{\infty}\left(\dfrac{2}{p+1}\right)^{\frac{q}{2}}$ and $\beta=p-\dfrac{q(p+1)}{2}$.\\
				 Then $U_{h}$ achieves blow up in a finite time $T^*_{h}$.
			\end{th1}
			\begin{proof} We suppose that $T^*_{h}=+\infty$.
		We multiply the first equation of \eqref{Eh} by $hu_{j}$ and we sum for $j=1...,N$ we get
		\begin{eqnarray}
	\sum\limits_{j=1}^{N}{hu_{j}(t)\dfrac{du_{j}(t)}{dt}}&=&
	\sum\limits_{j=1}^{N}{hu_{j}(t)\dfrac{u_{j+1}(t)-2u_{j}(t)+u_{j-1}(t)}{h^{2}}} 
	 +\sum\limits_{j=1}^{N}{hu_{j}^{p+1}(t)}\\ \nonumber  &&\ \ -\sum\limits_{j=1}^{N}{b_jhu_{j}(t)\left|\dfrac{u_{j+1}(t)-u_{j-1}(t)}{2h}\right|^{q}}.	
	 \label{2}
		\end{eqnarray}
		But
			\begin{eqnarray*}
			 	\sum\limits_{j=1}^{N}{hu_{j}(t)\dfrac{u_{j+1}(t)-2u_{j}(t)+u_{j-1}(t)}{h^{2}}}
				&=&\sum\limits_{j=1}^{N}{\dfrac{u_{j+1}(t)-u_{j}(t)}{h}u_{j}(t)}-\sum\limits_{j=1}^{N}{\dfrac{u_{j}(t)-u_{j-1}(t)}{h}u_{j}(t)} \\
				&=& \sum\limits_{j=2}^{N+1}{\dfrac{u_{j}(t)-u_{j-1}(t)}{h}u_{j-1}(t)}-\sum\limits_{j=1}^{N}{\dfrac{u_{j}(t)-u_{j-1}(t)}{h}u_{j}(t)} \\
				&=& -\sum\limits_{j=1}^{N}{\dfrac{(u_{j}(t)-u_{j-1}(t))^{2}}{h}}-\dfrac{u^2_{N}(t)}{h}\\
				&=& -\sum\limits_{j=1}^{N+1}{\dfrac{(u_{j}(t)-u_{j-1}(t))^{2}}{h}}.
			\end{eqnarray*}	
			Then \eqref{2} implies
				\begin{equation*}
					\sum\limits_{j=1}^{N}{hu_{j}(t)\dfrac{du_{j}(t)}{dt}}=
					 -\sum\limits_{j=1}^{N+1}{\dfrac{(u_{j}(t)-u_{j-1}(t))^{2}}{h}}+\sum\limits_{j=1}^{N+1}{h(u_{j}(t))^{p+1}}-\sum\limits_{j=1}^{N}{hb_ju_{j}(t)\left|\dfrac{u_{j+1}(t)-u_{j-1}(t)}{2h}\right|^{q}}.	
				\end{equation*}
				which implies that
					\begin{eqnarray*}
					\dfrac{1}{2}\dfrac{d}{dt}\sum\limits_{j=1}^{N+1}{hu^2_{j}(t)}
				&=&
					-2\left(\dfrac{1}{2} \sum\limits_{j=1}^{N+1}{\dfrac{(u_{j}(t)-u_{j-1}(t))^{2}}{h}}-\dfrac{1}{p+1}\sum\limits_{j=1}^{N+1}{hu_{j}^{p+1}(t)}\right)
					\\ &&\ \ \ \ \ \ +\left(1-\dfrac{2}{p+1}\right)\sum\limits_{j=1}^{N+1}{hu_{j}^{p+1}(t)}-\sum\limits_{j=1}^{N}{hb_ju_{j}(t)\left|\dfrac{u_{j+1}(t)-u_{j-1}(t)}{2h}\right|^{q}}\\	
					&=& -2J(t)+\dfrac{p-1}{p+1}\sum\limits_{j=1}^{N+1}{hu_{j}^{p+1}(t)}-\sum\limits_{j=1}^{N}{hb_ju_{j}(t)\left|\dfrac{u_{j+1}(t)-u_{j-1}(t)}{2h}\right|^{q}}.
					\end{eqnarray*}				
					Then 
						\begin{equation*}
						\dfrac{d}{dt}\sum\limits_{j=1}^{N+1}{hu_{j}^{2}(t)}=	-4J(t)+\dfrac{2(p-1)}{p+1}\sum\limits_{j=1}^{N+1}{hu_{j}^{p+1}(t)}-2\sum\limits_{j=1}^{N}{hb_ju_{j}(t)\left|\dfrac{u_{j+1}(t)-u_{j-1}(t)}{2h}\right|^{q}}
						\end{equation*}
			implies that
			\begin{equation*}
			\dfrac{d}{dt}\left\| U_{h}(t)\right\| _{2}^{2}=	-4J(t)+\dfrac{2(p-1)}{p+1}	\left\| U_{h}(t)\right\|^{p+1} _{p+1}-2\sum\limits_{j=1}^{N}{hb_ju_{j}(t)\left|\dfrac{u_{j+1}(t)-u_{j-1}(t)}{2h}\right|^{q}} \end{equation*}
			but $J(t)\leq 0$ then
			\begin{equation}
			\dfrac{d}{dt}\left\| U_{h}(t)\right\| _{2}^{2}	\geq 	\dfrac{2(p-1)}{p+1}	\left\| U_{h}(t)\right\|^{p+1} _{p+1}-2\sum\limits_{j=1}^{N}{hb_ju_{j}(t)\left|\dfrac{u_{j+1}(t)-u_{j-1}(t)}{2h}\right|^{q}}
			\label{33}
			\end{equation}		
			Now using the discret inegality of H\"{o}lder we get
			\begin{eqnarray}
		     \sum\limits_{j=1}^{N}{hb_ju_{j}(t)\left|\dfrac{u_{j+1}(t)-u_{j-1}(t)}{2h}\right|^{q}}
			&\leq&b_{\infty} \left( \sum\limits_{j=1}^{N}{h^{p+1}u_{j}^{p+1}(t)}\right)^{\frac{1}{p+1}} \left(\sum\limits_{j=1}^{N}
			{\left(\left|\dfrac{u_{j+1}(t)-u_{j-1}(t)}{2h}\right|^{q}\right) ^{\frac{p+1}{p}}}\right) ^{\frac{p}{p+1}} \nonumber\\
			&=&b_{\infty} h^{\frac{p}{p+1}}\left( \sum\limits_{j=1}^{N}{hu_{j}^{p+1}(t)}\right)^{\frac{1}{p+1}} \left(\sum\limits_{j=1}^{N}
			{\left(\left|\dfrac{u_{j+1}(t)-u_{j-1}(t)}{2h}\right|^{q}\right) ^{\frac{p+1}{p}}}\right) ^{\frac{p}{p+1}} \nonumber\\
			&=& b_{\infty}h^{\frac{p}{p+1}} 	\left\| U_{h}(t)\right\|_{p+1} \left(\sum\limits_{j=1}^{N}
			{\left|\dfrac{u_{j+1}(t)-u_{j-1}(t)}{2h}\right|^{\frac{q(p+1)}{p}}}\right) ^{\frac{p}{p+1}}
			\label{3}
			\end{eqnarray}	
			On an other hand we have
			\begin{equation*}
			\left(\sum\limits_{j=1}^{N}
			{\left|\dfrac{u_{j+1}(t)-u_{j-1}(t)}{2h}\right|^{\frac{q(p+1)}{p}}}\right) ^{\frac{p}{p+1}} = 
			\left(\sum\limits_{j=1}^{N}
			{\left|\dfrac{u_{j+1}(t)-u_{j}(t)}{2h}+\dfrac{u_{j}(t)-u_{j-1}(t)}{2h}\right|^{\frac{q(p+1)}{p}}}\right) ^{\frac{p}{p+1}}.
			\end{equation*}	
			We use that $\left( \dfrac{\alpha+\beta}{2}\right) ^{n}\leq \dfrac{\alpha^{n}+\beta^{n}}{2}$, we get
			\begin{eqnarray}
		   &&\left(\sum\limits_{j=1}^{N}
			{\left|\dfrac{u_{j+1}(t)-u_{j-1}(t)}{2h}\right|^{\frac{q(p+1)}{p}}}\right) ^{\frac{p}{p+1}} \nonumber\\
			&\leq&  \left(\dfrac{\sum\limits_{j=1}^{N}
			{\left|\dfrac{u_{j+1}(t)-u_{j}(t)}{h}\right| ^{\frac{q(p+1)}{p}}}+\sum\limits_{j=1}^{N} {\left|\dfrac{u_{j}(t)-u_{j-1}(t)}{h}\right| ^{\frac{q(p+1)}{p}}}}{2}\right)^{\frac{p}{p+1}} \nonumber\\
			&\leq&  \dfrac{1}{2} \left(\sum\limits_{j=1}^{N}
			{\left|\dfrac{u_{j+1}(t)-u_{j}(t)}{h}\right| ^{\frac{q(p+1)}{p}}}\right)^{\frac{p}{p+1}}+ \dfrac{1}{2} \left(\sum\limits_{j=1}^{N} {\left|\dfrac{u_{j}(t)-u_{j-1}(t)}{h}\right| ^{\frac{q(p+1)}{p}}}\right)^{\frac{p}{p+1}} \nonumber\\ 
			&=&  \dfrac{1}{2} \left(\sum\limits_{j=1}^{N+1}
			{\left|\dfrac{u_{j}(t)-u_{j-1}(t)}{h}\right|^{\frac{q(p+1)}{p}}}\right)^{\frac{p}{p+1}}+ \dfrac{1}{2} \left(\sum\limits_{j=1}^{N+1} {\left|\dfrac{u_{j}(t)-u_{j-1}(t)}{h}\right| ^{\frac{q(p+1)}{p}}}\right)^{\frac{p}{p+1}} \nonumber\\
			&\leq &  \left(\sum\limits_{j=1}^{N+1}
			{\left|\dfrac{u_{j}(t)-u_{j-1}(t)}{h}\right|^{\frac{q(p+1)}{p}}}\right)^{\frac{p}{p+1}} \nonumber\\
			&=&\dfrac{1}{h^{\frac{p}{p+1}}} \left(\sum\limits_{j=1}^{N+1}
			{h\left|\dfrac{u_{j}(t)-u_{j-1}(t)}{h}\right|^{\frac{q(p+1)}{p}}}\right)^{\frac{p}{p+1}} \nonumber\\
			&=& \dfrac{1}{h^{\frac{p}{p+1}}}\left\|(\delta^{-}_{x}U_{h}(t))^{q} \right\|_{\frac{p+1}{p}} 
			\label{3etoile}	
			\end{eqnarray}	
			with
			\begin{eqnarray*}
			\left\|(\delta^{-}_{x}U_{h}(t))^{q} \right\|_{\frac{p+1}{p}}&=& \left(\sum\limits_{j=1}^{N+1}
			{h\left|\dfrac{u_{j}(t)-u_{j-1}(t)}{h}\right|^{\frac{q(p+1)}{p}}}\right)^{\frac{p}{p+1}}\\
			&=&\left[ \left(\sum\limits_{j=1}^{N+1}
			{h\left|\dfrac{u_{j}(t)-u_{j-1}(t)}{h}\right|^{\frac{q(p+1)}{p}}}\right)^{\frac{p}{q(p+1)}}\right] ^{q}\\
			&=& 	\left\|\delta^{-}_{x}U_{h}(t) \right\|^{q}_{\frac{q(p+1)}{p}}	
				\end{eqnarray*}	
			Then from \eqref{3etoile} and using that $q\leq \dfrac{2p}{p+1}$ we get
				\begin{eqnarray}
					\left(\sum\limits_{j=1}^{N}
					{\left|\dfrac{u_{j+1}(t)-u_{j-1}(t)}{2h}\right|^{\frac{q(p+1)}{p}}}\right) ^{\frac{p}{p+1}}&\leq& \dfrac{1}{h^{\frac{p}{p+1}}}\left\|\delta^{-}_{x}U_{h}(t) \right\|^{q}_{\frac{q(p+1)}{p}}	 \nonumber\\
					&\leq& \dfrac{1}{h^{\frac{p}{p+1}}}\left\|\delta^{-}_{x}U_{h}(t) \right\|^{q}_{2}
					\label{4etoile}
			\end{eqnarray}	
			Therefore \eqref{3} and \eqref{4etoile} implies
			\begin{eqnarray*}	
			 \sum\limits_{j=1}^{N}{hb_ju_{j}(t)\left|\dfrac{u_{j+1}(t)-u_{j-1}(t)}{2h}\right|^{q}}&\leq& b_{\infty}h^{\frac{p}{p+1}}\left\|U_{h} (t)\right\|_{p+1} \dfrac{1}{h^{\frac{p}{p+1}}}\left\|\delta^{-}_{x}U_{h}(t) \right\|^{q}_{2}\\
			 &=&b_{\infty} \left\|U_{h}(t) \right\|_{p+1} \left\|\delta^{-}_{x}U_{h}(t) \right\|^{q}_{2}.
			\end{eqnarray*} 
			Now using that	$J\leq 0$ we get
			\begin{eqnarray*}
			\dfrac{1}{2}\sum\limits_{j=1}^{N+1}{h\left( \dfrac{u_{j}(t)-u_{j-1}(t)}{h}\right) ^{2}}-\dfrac{1}{p+1}\left\|U_{h}(t) \right\|^{p+1}_{p+1}\leq 0
			&\Rightarrow &\left\|\delta^{-}_{x}U_{h} (t)\right\|^{2}_{2}\leq \dfrac{2}{p+1}\left\|U_{h} (t)\right\|^{p+1}_{p+1}\\
			&\Rightarrow& \left\|\delta^{-}_{x}U_{h}(t) \right\|^{q}_{2}\leq \left( \dfrac{2}{p+1}\right) ^{\frac{q}{2}}\left\|U_{h}(t) \right\|^{\frac{q(p+1)}{2}}_{p+1}\\
			\end{eqnarray*}
		So
		\begin{eqnarray}
		\sum\limits_{j=1}^{N}{hb_ju_{j}(t)\left|\dfrac{u_{j+1}(t)-u_{j-1}(t)}{2h}\right|^{q}} &\leq &b_{\infty} \left\|U_{h} (t)\right\|_{p+1} \left( \dfrac{2}{p+1}\right) ^{\frac{q}{2}}\left\|U_{h}(t) \right\|^{\frac{q(p+1)}{2}}_{p+1} \nonumber\\
		&=& c \left\|U_{h} (t)\right\|^{p+1-\beta}_{p+1}
		\label{5etoile}
		\end{eqnarray}
	with $c=b_{\infty}\left( \dfrac{2}{p+1}\right) ^{\frac{q}{2}}>0$ and $\beta=p-\dfrac{q(p+1)}{2}\geq 0$.\\
		Finally \eqref{33} and \eqref{5etoile} gives
		\begin{eqnarray*}
		\dfrac{d}{dt}\left\|U_{h} (t)\right\|^{2}_{2}&\geq & \dfrac{2(p-1)}{p+1}\left\|U_{h}(t) \right\|^{p+1}_{p+1}-2c\left\|U_{h}(t) \right\|^{p+1-\beta}_{p+1}\\
		&=& 2 \left\|U_{h}(t) \right\|^{p+1}_{p+1}\left( \dfrac{p-1}{p+1}-c \left\|U_{h}(t) \right\|^{-\beta}_{p+1}\right) 	
		\end{eqnarray*}
		Let $F=\left\|U_{h} \right\|^{2}_{2}$, as $p+1\geq 2$ then $F\leq \left\|U_{h} \right\|^{2}_{p+1}$ which implies that $\left\|U_{h} \right\|^{p+1}_{p+1}\geq F^{\frac{p+1}{2}}.$
		Then for all $t\geq 0$
			\begin{equation*}
				F'(t)\geq 2 F^{\frac{p+1}{2}}(t)\left( \dfrac{p-1}{p+1}-c \left\|U_{h}(t) \right\|^{-\beta}_{p+1}\right) 
					\end{equation*}
		As $\dfrac{dU_{h}(t)}{dt}\geq 0$ then $U_{h}(t)\geq U_{h}(0)$ for all $t\geq 0$, so
		\begin{equation}
		F'(t)\geq 2 F^{\frac{p+1}{2}}(t)\left( \dfrac{p-1}{p+1}-c \left\|U_{h}(0) \right\|^{-\beta}_{p+1}\right) 
		\label{F}
		\end{equation}	
		Let $k:=2\left( \dfrac{p-1}{p+1}-c \left\|U_{h}(0) \right\|^{-\beta}_{p+1}\right)$.\\
		Note that if $q< \dfrac{2p}{p+1}$ and for a large initial data satisfying $\left\|U_{h}(0) \right\|_{p+1}>\left(\dfrac{c(p+1)}{p-1}\right)^{\frac{1}{\beta}}$, we have $k>0$.\\
		If $q= \dfrac{2p}{p+1}$ and $0<b_{\infty}<\dfrac{p-1}{2}\bigg(\dfrac{2}{p+1}\bigg)^{1/p+1}$, then $k>0$.\\
		In the two cases we have
		\begin{eqnarray*}
	 F'(t)\geq k  F^{\frac{p+1}{2}}(t)
		&\Rightarrow&  F'(t)F^{-\frac{p+1}{2}}(t)\geq k.\\
		&\Rightarrow&  \dfrac{2}{1-p}F^{\frac{1-p}{2}}(t)\geq kt-\dfrac{2}{1-p}F^{\frac{1-p}{2}}(0).
		\end{eqnarray*}
		which is impossible for all $t\geq 0$ sufficiently large and $F(0)>0$, this contradiction shows that $T^*_{h}<\infty$ and hence we get blow up of the semidiscrete solution.		
		\end{proof}
	\begin{rem}
		We note that the upper-bound of $b_\infty$ in the semidiscrete problem is the same proved in the theoretical case in \cite{alfonsi}, when $b$ is a constant.
	\end{rem}
		\section{Convergence of the semidiscrete scheme}
		\noindent In this section, we prove the convergence of the semidiscrete scheme \eqref{Eh}.\\
		Let $u$ be the solution of \eqref{E}. For each $h$, we denote $u_{h}(t)=\left(0,u(x_{1},t),...,u(x_{N},t),0\right)'$. The next theorem establishes that for each fixed time interval $[0,T]$ where $u$ is defined, the solution of the semidiscrete problem \eqref{Eh} approximates $u$ as $h\longrightarrow 0.$
		\begin{th1}
			Let $\tilde{T}^{*}_{h}=\min(T^*_{h},T^{*})$ and $u_{h}(t)$ be the exact solution of \eqref{E}.We suppose that $u_{h}\in C^{4}((0,T^{*}),\mathbb{R}^{N+2})$, $J(0)<0$ and we assume that the initial condition $U_{h}^{0}$ satisfies 
			\begin{equation}
			\left\|U_{h}^{0}-u_{h}(0) \right\|_{\infty} =o(1) \text{ as } h\longrightarrow 0.
			\label{7etoile}
			\end{equation}
			Then for $h$ sufficiently small, problem \eqref{Eh} has a unique solution $U_{h}\in C^{1}\left([0,T^*_{h}), \mathbb{R}^{N+2}\right)$ such that
			\begin{equation*}
			\max\limits_{t\in [0,T]}\left\|U_{h}(t)-u_{h}(t) \right\|_{\infty} =O\left(\left\|U_{h}^{0}-u_{h}(0) \right\|_{\infty}+h^{2}\right) \text{ as } h\longrightarrow 0 \ \text{for all} \ T<\tilde{T}^{*}_{h}.
			\label{8etoile}
			\end{equation*}
		\end{th1}
		\begin{proof}
			Let $T<\tilde{T}^{*}_{h}$, for all $t\in [0,T)$, we denote $e_{h}(t)=U_{h}(t)-u_{h}(t)$ the error of discretization. Using \eqref{E}, \eqref{Eh} and Taylor's expansion we get that for $j=1,...,N$
			\begin{eqnarray}
			\dfrac{de_{j}}{dt}(t)-\delta^{2}_{x}e_{j}(t)=U_{j}^{p}(t)&-& u^{p}(x_{j},t)-b_j \left(\left| \dfrac{u_{j+1}(t)-u_{j-1}(t)}{2h} \right|^{q}-\left| \dfrac{u(x_{j+1},t)-u(x_{j-1},t)}{2h}\right| ^{q}  \right) \nonumber\\
			&&+\dfrac{h^{2}}{24}\dfrac{\partial^{4}u}{\partial x^{4}}(\tilde{x_{j}},t)+\dfrac{h^{2}}{24}\dfrac{\partial^{4}u}{\partial x^{4}}(\tilde{\tilde{x_{j}}},t)+o(h^{2}) 
			\label{10etoile}
			\end{eqnarray}
			Now we denote $f(X)=X^{p}$ and we use the mean value theorem we get
			\begin{equation*}
			\left|U_{j}^{p}(t)-u^{p}(x_{j},t) \right|\leq \left|f'(z_{j}) \right| \left|e_{j}(t) \right|  
			\end{equation*}	
			where $z_{j}$ is an intermediate value between $U_{j}(t)$ and $u(x_{j},t)$. Since $u_{h}(t)$ and $U_{h}(t)$ are bounded for all $t\in [0,T]$, then we can suppose that there exists a positive constant $c_{1}$ such that $\left|f'(z_{j}) \right|\leq c_{1}$.\\
			On the other hand, we denote $g(X)=\left| X\right| ^{q}$ and we use the mean value theorem we get
			\begin{eqnarray*}
		&&	\left|\left| \dfrac{u_{j+1}(t)-u_{j-1}(t)}{2h} \right|^{q}-\left| \dfrac{u(x_{j+1},t)-u(x_{j-1},t)}{2h}\right| ^{q} \right|\\
		&\leq& \left|g'(\xi_{j}) \right| \left| \dfrac{u_{j+1}(t)-u_{j-1}(t)}{2h} - \dfrac{u(x_{j+1},t)-u(x_{j-1},t)}{2h} \right|  
			\end{eqnarray*}	
			where $\xi_{j}$ is an intermediate value between $\dfrac{u_{j+1}(t)-u_{j-1}(t)}{2h}$ and $\dfrac{u(x_{j+1},t)-u(x_{j-1},t)}{2h}$.\\
			Let prove that $\dfrac{u_{j+1}(t)-u_{j-1}(t)}{2h}$ and $\dfrac{u(x_{j+1},t)-u(x_{j-1},t)}{2h}$ are bounded.\\
			Using Taylor's formula we have
			\begin{equation*}
			\dfrac{u_{j+1}(t)-u_{j-1}(t)}{2h}=\dfrac{\partial u}{\partial x}(x_{j},t)+h\left(\dfrac{\partial^{2} u}{\partial x^{2}}(\tilde{x_{j}},t)+\dfrac{\partial^{2} u}{\partial x^{2}}(\tilde{\tilde{x_{j}}},t)\right)
			\end{equation*}
			But $u_{h}\in C^{4}((0,T^{*}),\mathbb{R}^{N+2})$ then $\dfrac{u(x_{j+1},t)-u(x_{j-1},t)}{2h}$ is bounded.\\
			On the other hand we have
			\begin{eqnarray*}
		    \left\|\delta_{x}U_{h}(t)\right\|_{2}^{2}&=& \sum\limits_{j=1}^{N}h\left|\dfrac{u_{j+1}(t)-u_{j-1}(t)}{2h}\right|^{2}\\
		    &=& \sum\limits_{j=1}^{N}\dfrac{1}{4h}\left|u_{j+1}(t)-u_{j}(t)+u_{j}(t)-u_{j-1}(t)\right|^{2}\\
		    &\leq& \dfrac{1}{2h}\sum\limits_{j=1}^{N}\left|u_{j+1}(t)-u_{j}(t)\right|^{2}+\dfrac{1}{2h}\sum\limits_{j=1}^{N}\left|u_{j}(t)-u_{j-1}(t)\right|^{2}\\
		    &\leq& \sum\limits_{j=1}^{N+1}\dfrac{(u_{j}(t)-u_{j-1}(t))^{2}}{h}\\
		    &=& 2J(t)+\dfrac{2}{p+1}\left\|U_{h}(t)\right\|^{p+1}_{p+1}
			\end{eqnarray*}
			Using that $J$ is nonincreasing and $J(0)<0$ then
			\begin{equation*}
			\left\|\delta_{x}U_{h}(t)\right\|^{2}_{2}\leq \dfrac{2}{p+1}\left\|U_{h}(t)\right\|^{p+1}_{p+1}<+\infty .
			\end{equation*}			 
			Finally, since $\dfrac{u_{j+1}(t)-u_{j-1}(t)}{2h}$ and $\dfrac{u(x_{j+1},t)-u(x_{j-1},t)}{2h}$ are bounded before blow up, then we can suppose that there exists a positive constant $c_{2}$ such that $\left|g'(\xi_{j}) \right|\leq c_{2}$.\\		
		Finally $e_{j}(t)$ satisfies
		\begin{equation*}
		\dfrac{de_{j}(t)}{dt}-\delta_{x}^{2}e_{j}(t)\leq c_{1}\left|e_{j}(t)\right|+b_{\infty}c_{2}\left|\delta_{x}e_{j}\right|+\dfrac{h^{2}}{24}\dfrac{\partial^{4}u}{\partial x^{4}}(\tilde{x_{j}},t)+\dfrac{h^{2}}{24}\dfrac{\partial^{4}u}{\partial x^{4}}(\tilde{\tilde{x_{j}}},t)+o(h^{2})
		\end{equation*}
			Let $R$ and $K$ be two positive constants such that 
			\begin{equation*}		
			\bigg{\vvvert{\dfrac{\partial ^{4}u}{\partial x^{4} }}\bigg\vvvert} \leq R \text{ and } K=o(1)+\dfrac{R}{12}
			\end{equation*}
			then \eqref{10etoile} implies 
			\begin{equation}
				\dfrac{de_{j}}{dt}(t)-\delta^{2}_{x}e_{j}(t)- c_{1}\left|e_{j}(t) \right|-b_{\infty}c_{2}\left|\delta_{x}e_{j}\right|-Kh^{2}\leq 0\\ 
				\label{negative}
			\end{equation}
			We consider now the function $W_{h}$ defined by 
			\begin{equation*}
			W_{j}(t)=\exp((c_{1}+1)t)\left( \left\|e(0) \right\|_{\infty} +Kh^{2}\right),\ \ \ 0\leq j\leq N+1,\ \ \ t\in [0,T).
			\end{equation*}
			But for all $0\leq j\leq N+1$ and $t\in (0,T)$, $W_{j}(t)$ satisfies
			\begin{eqnarray}
				&&	\dfrac{dW_{j}}{dt}(t)-\delta^{2}_{x}W_{j}(t)- c_{1}\left|W_{j}(t) \right|-b_{\infty}c_{2}\left|\delta_{x}W_{j}(t)\right|-Kh^{2}  \nonumber\\
				&=& (c_{1}+1)\exp((c_{1}+1)t)\left( \left\|e(0) \right\|_{\infty} +Kh^{2}\right)-c_{1}\exp((c_{1}+1)t)\left( \left\|e(0) \right\|_{\infty} +Kh^{2}\right)-Kh^{2} \nonumber\\
				&=&\exp((c_{1}+1)t)\left( \left\|e(0) \right\|_{\infty} +Kh^{2}\right)-Kh^{2} \nonumber \\
				&>& 0.
				\label{positif}
			\end{eqnarray}		
			And
			\begin{equation*}
			\left\lbrace 
				\begin{array}{lll}	
				W_{N+1}(t)=\exp((c_{1}+1)t)\left( \left\|e(0) \right\|_{\infty} +Kh^{2}\right)>0=e_{N+1}(t),\ \ \ t\in (0,T),\\
				W_{j}(0)= \left\|e(0) \right\|_{\infty}+Kh^{2}>e_{j}(0),\ \ \ 0\leq j\leq N+1. 
				 \end{array}
				 \right.
			\end{equation*}
			Next, we need the lemma below which	is another form of the maximum principle for semidiscrete equations called the comparaison lemma (proved in \cite{nabongo}, lemma 2.3).
				\begin{lem}
					Let  $U_{h}(t),V_{h}(t)\in C^{1}\left((0,T),\mathbb{R}^{N+2}\right) $ and $f\in C^{0}\left(\mathbb{R}\times\mathbb{R},\mathbb{R}\right) $ such that for $t\in (0,T)$
					\begin{equation*}
						\left\{ 
						\begin{array}{lllll}
							\dfrac{dv_{j}(t)}{dt}-\delta^{2}_{x}v_{j}(t)+f(v_{j}(t),t)< \dfrac{du_{j}(t)}{dt}-\delta^{2}_{x}u_{j}(t)+f(u_{j}(t),t), \ \ 1\leq j \leq N\\
							v_{0}(t)<u_{0}(t), v_{N+1}(t)<u_{N+1}(t),\ \ \\
							v_{j}^{0}<u_{j}^{0} \text{ for }1\leq j \leq N.
						\end{array}
						\right.\\
					\end{equation*} 	
					then we have $v_{i}(t)<u_{j}(t)$ for all $t\in (0,T)$ and $1\leq j\leq N.$
					\label{lemcomparaison}
				\end{lem}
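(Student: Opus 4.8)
The plan is a first-contact-time argument that converts the \emph{strict} inequalities in the hypotheses into a direct contradiction, so that no Lipschitz bound on $f$ and no Gronwall estimate are needed. Set $w_j(t)=u_j(t)-v_j(t)$ for $0\le j\le N+1$. Subtracting the two differential inequalities gives, for $1\le j\le N$ and $t\in(0,T)$,
\begin{equation*}
\frac{dw_j}{dt}(t)-\delta^{2}_{x} w_j(t)>f(v_j(t),t)-f(u_j(t),t),
\end{equation*}
while the boundary and initial data yield $w_0(t)>0$, $w_{N+1}(t)>0$ on $(0,T)$, and $w_j(0)=u_j^{0}-v_j^{0}>0$ for $1\le j\le N$ (each $w_j$ being continuous on $[0,T)$). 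The goal is to prove $w_j(t)>0$ for all $t\in(0,T)$ and $1\le j\le N$.

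First I would argue by contradiction: let $S=\{t\in(0,T):\ w_j(t)\le 0\ \text{for some}\ j\in\{1,\dots,N\}\}$ and assume $S\neq\emptyset$. Since each $w_j$ is continuous with $w_j(0)>0$, the set $S$ is closed in $(0,T)$ with $\inf S>0$, so $t_0:=\inf S$ satisfies $0<t_0<T$ and $t_0\in S$. By minimality of $t_0$ we have $w_j(t)>0$ for every $j$ and every $t\in(0,t_0)$, hence by continuity $w_j(t_0)\ge 0$ for all $j$; and since $t_0\in S$ there is an index $j_0$ with $w_{j_0}(t_0)\le 0$, which combined with $w_{j_0}(t_0)\ge 0$ gives $w_{j_0}(t_0)=0$. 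Moreover $j_0\in\{1,\dots,N\}$, because $w_0$ and $w_{N+1}$ stay strictly positive.

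Finally I would evaluate the differential inequality at the contact point $(j_0,t_0)$. Since $w_{j_0}$ is differentiable there and $w_{j_0}(t_0)=0\le w_{j_0}(t_0-k)$ for small $k>0$, we get $\frac{dw_{j_0}}{dt}(t_0)\le 0$; and since $w_{j_0-1}(t_0),w_{j_0+1}(t_0)\ge 0=w_{j_0}(t_0)$, we get $\delta^{2}_{x} w_{j_0}(t_0)\ge 0$, so that $\frac{dw_{j_0}}{dt}(t_0)-\delta^{2}_{x} w_{j_0}(t_0)\le 0$. On the other hand, $w_{j_0}(t_0)=0$ means $u_{j_0}(t_0)=v_{j_0}(t_0)$, so the right-hand side $f(v_{j_0}(t_0),t_0)-f(u_{j_0}(t_0),t_0)$ vanishes, and the differential inequality forces $\frac{dw_{j_0}}{dt}(t_0)-\delta^{2}_{x} w_{j_0}(t_0)>0$ — a contradiction. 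Hence $S=\emptyset$, i.e. $v_j(t)<u_j(t)$ for all $t\in(0,T)$ and $1\le j\le N$. The only steps requiring a little care are checking that the first-contact time $t_0$ is interior and that the spatial minimum is attained there with value exactly zero (which is where continuity up to $t=0$ and the strict separation of the boundary values enter); I do not anticipate any real obstacle, the mere continuity of $f$ being enough precisely because the critical term $f(v_{j_0},t_0)-f(u_{j_0},t_0)$ cancels identically at the contact point.
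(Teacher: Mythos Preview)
Your argument is correct: the first-contact-time reduction, combined with the observation that at the contact point the $f$-terms cancel identically (so only continuity of $f$ is required), cleanly yields the contradiction. The only cosmetic point is that the hypothesis $U_h,V_h\in C^1((0,T),\mathbb{R}^{N+2})$ as stated does not literally include continuity at $t=0$; you implicitly assume continuity on $[0,T)$ in order to use $w_j(0)>0$ to push $t_0$ strictly into the interior, but this is clearly intended by the initial-data hypothesis and is harmless.

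As for the comparison: the paper does not actually prove this lemma. It is stated there without proof and attributed to Boni--Nabongo (\emph{Comment.\ Math.\ Univ.\ Carolin.}\ 49 (2008), Lemma~2.3). Your first-touching-time argument is precisely the standard proof one finds in that literature (and mirrors the structure the paper uses elsewhere, e.g.\ in its positivity lemma), so there is no methodological divergence to discuss.
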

		Using \eqref{negative}, \eqref{positif} and lemma \ref{lemcomparaison} we get that
			\begin{equation*}
			W_{j}(t)>e_{j}(t) \text{ for } t\in(0,T),\ \ 0\leq j\leq N+1. 
			\end{equation*}
			Using the same argument for $-e$ we also show that
			\begin{equation*}
			W_{j}(t)>-e_{j}(t) \text{ for } t\in(0,T),\ \ 0\leq j\leq N+1. 
			\end{equation*}
			which implies that 
			\begin{eqnarray*}
				&& \left| e_{j}(t)\right| < W_{j}(t)\ \ \ \ t\in(0,T^{*}_{h}),\ \ 0\leq j\leq N+1\\ 
				&\Rightarrow&	\left\|U_{h}(t)-u_{h}(t) \right\|_{\infty} \leq \exp((c_{1}+1)t)\left(	\left\|U_{h}^{0}-u_{h}(0) \right\|_{\infty}+Kh^{2}\right), \ \ \ t\in(0,T).		 
			\end{eqnarray*}
			And then
			\begin{equation*}
			\max\limits_{t\in [0,T)}\left\|U_{h}(t)-u_{h}(t) \right\|_{\infty} =O\left(	\left\|U_{h}^{0}-u_{h}(0) \right\|_{\infty}+h^{2}\right) \text{ as } h\longrightarrow 0.
			\end{equation*}
		\end{proof}
			\section{Numerical blow-up rate}
			In this section, we consider positive solution of \eqref{Eh}, we give the asymptotic behavior (blow up rate) of $U_{h}$ and we prove the following theorem:
			\begin{th1}
				Let $U_{h}$ be the numerical solution of \eqref{Eh}. Assume that $p>1$, $1\leq q \leq \dfrac{2p}{p+1}:$	
				If $q< \dfrac{2p}{p+1}$ we choose a large initial data.
				If $q= \dfrac{2p}{p+1}$ we choose $0<b_{\infty}<\dfrac{p-1}{2}\bigg(\dfrac{2}{p+1}\bigg)^{1/p+1}$.\\
				We suppose also that  $U_{h}$ blows up in finite time $T_{h}$. Then, there exists two positive constants $C_{1}$ and $C_{2}$ such that
				\begin{equation*}
				C_{1}\left(T^*_{h}-t\right)^{-\frac{1}{p-1}}\leq \max_{1\leq j\leq N}u_{j}(t)\leq 	C_{2}\left(T^*_{h}-t\right)^{-\frac{1}{p-1}}\ \ \ for \ all \ t\in (0, T^*_h)
				\end{equation*}
			\end{th1} 
			\begin{proof}
				For all $j=1,...,N$ and $t\in (0,T^*_{h})$ we have
				\begin{equation*}
				u'_{j}(t)=\dfrac{u_{j+1}(t)-2u_{j}(t)+u_{j-1}(t)}{h^{2}}+u_{j}^{p}(t)-b\left|\dfrac{u_{j+1}(t)-u_{j-1}(t)}{2h}\right|^{q}.
				\end{equation*}
				which implies that
				\begin{equation*}
				u'_{j}(t)\leq \dfrac{u_{j+1}(t)-2u_{j}(t)+u_{j-1}(t)}{h^{2}}+u_{j}^{p}(t).
				\end{equation*}
				We multiply by $u_{j}(t)\geq 0$ and we sum for $j=1,...,N$ we get
				\begin{equation}
				\sum\limits_{j=1}^{N}u'_{j}(t)u_{j}(t)\leq \dfrac{1}{h^{2}}\left(\sum\limits_{j=1}^{N}\left(u_{j+1}(t)-u_{j}(t)\right)u_{j}(t)+\sum\limits_{j=1}^{N}\left(u_{j-1}(t)-u_{j}(t)\right)u_{j}(t)\right)+\sum\limits_{j=1}^{N}u_{j}^{p+1}(t).
				\label{estimation1}
				\end{equation}
				But
				\begin{eqnarray*}
					&&\sum\limits_{j=1}^{N}\left(u_{j+1}(t)-u_{j}(t)\right)u_{j}(t)+\sum\limits_{j=1}^{N}\left(u_{j-1}(t)-u_{j}(t)\right)u_{j}(t)\\
					&=&\sum\limits_{j=1}^{N}\left(u_{j+1}(t)-u_{j}(t)\right)u_{j}(t)-\sum\limits_{j=1}^{N}\left(u_{j+1}(t)-u_{j}(t)\right)u_{j+1}(t)-u_{1}^{2}(t)\\
					&=& -\sum\limits_{j=1}^{N}\left(u_{j+1}(t)-u_{j}(t)\right)^{2}-u_{1}^{2}(t)\\
					&\leq & 0.
				\end{eqnarray*}
				Then \eqref{estimation1} implies 
				\begin{equation*}
				\sum\limits_{j=1}^{N}u'_{j}(t)u_{j}(t)\leq\sum\limits_{j=1}^{N}u_{j}^{p+1}(t).
				\end{equation*}
				And hence
				\begin{equation}
				\dfrac{1}{2}\dfrac{d}{dt}\left(\sum\limits_{j=1}^{N}u_{j}^{2}(t)\right)\leq\sum\limits_{j=1}^{N}\left((u_{j}(t))^{2}\right)^{\frac{p+1}{2}}\leq \left(\sum\limits_{j=1}^{N}(u_{j}(t))^{2}\right)^{\frac{p+1}{2}}.
				\label{estimation2}
				\end{equation}
				In fact, let $v_{j}=u_{j}^{2}$ and $r=\dfrac{p+1}{2}$, we have to show that
				\begin{equation*}
				\left(\sum\limits_{j=1}^{N}v_{j}^{r}\right)^{\frac{1}{r}}\leq \sum\limits_{j=1}^{N}v_{j}.
				\end{equation*}
				Let 
				\begin{equation*}
				A=\left(\sum\limits_{j=1}^{N}v_{j}^{r}\right)^{\frac{1}{r}},\ \ B=\sum\limits_{j=1}^{N}v_{j}\ \ \text{and}\ \ C=\max\limits_{1\leq j \leq N} v_{j}
				\end{equation*}
				Clearly we have $C\leq B$, then 
				\begin{equation*}
				v_{j}^{r}=v_{j}v_{j}^{r-1}\leq v_{j}C^{r-1}
				\end{equation*}
				which implies that
				\begin{equation*}
				\sum\limits_{j=1}^{N}v_{j}^{r}\leq C^{r-1}\sum\limits_{j=1}^{N}v_{j} \leq  B^{r-1}B = B
				\end{equation*}
				and finally 
				\begin{equation*}
				A\leq B
				\end{equation*}
				this proves \eqref{estimation2}.\\
				We define now $w(t)=\sum\limits_{j=1}^{N}u_{j}^{2}(t)$, then
				\eqref{estimation2} implies
				\begin{equation}
				w'(t)\leq 2w^{r}(t)
				\label{estimate}
				\end{equation}
				Integrating the above inequality between $t$ and $T^*_{h}$, we obtain
				\begin{equation*}
				\int\limits_{t}^{T^*_{h}}\dfrac{w'(s)}{w^{r}(s)}\leq 2(T^*_{h}-t).
				\end{equation*} 
				Changing variables and we use that $\lim\limits_{t\rightarrow T^*_{h}}w(t)=+\infty$ we get
				\begin{equation*}
				\int\limits_{w(t)}^{+\infty}\dfrac{dy}{y^{r}}\leq 2(T^*_{h}-t),
				\end{equation*}
				hence
				\begin{equation*}
				w(t)\geq \left(\dfrac{1}{p-1}\right)^{\frac{2}{p-1}}(T^*_{h}-t)^{\frac{-2}{p-1}}\\
				\Rightarrow \sum\limits_{j=1}^{N}u_{j}^{2}(t)\geq \left(\dfrac{1}{p-1}\right)^{\frac{2}{p-1}}(T^*_{h}-t)^{\frac{-2}{p-1}}.
				\end{equation*}
				But
				\begin{equation*}
				\left(\sum\limits_{j=1}^{N}u_{j}(t)\right)^{2}\geq\sum\limits_{j=1}^{N}u_{j}^{2}(t)\geq \left(\dfrac{1}{p-1}\right)^{\frac{2}{p-1}}(T^*_{h}-t)^{\frac{-2}{p-1}},
				\end{equation*}
				then
				\begin{equation}
				\sum\limits_{j=1}^{N}u_{j}(t)\geq \left(\dfrac{1}{p-1}\right)^{\frac{1}{p-1}}(T^*_{h}-t)^{\frac{-1}{p-1}}.
				\label{estimation3}
				\end{equation}
				In an other hand we have
				\begin{equation*}
				\max\limits_{1\leq j\leq N} u_{j}(t)\geq u_{j}(t)\Rightarrow \max\limits_{1\leq j\leq N} u_{j}(t)\geq \dfrac{1}{N} \sum\limits_{j=1}^{N}u_{j}(t). 
				\end{equation*}
				Therefore \eqref{estimation3} implies
				\begin{equation*}
				\max\limits_{1\leq j\leq N} u_{j}(t)\geq \dfrac{1}{N}\left(\dfrac{1}{p-1}\right)^{\frac{1}{p-1}}(T^*_{h}-t)^{\frac{-1}{p-1}}.  
				\end{equation*}	
				To prove the other inequality, recall the relation \eqref{F}
				\begin{equation*}
				w'(t)\geq 2\tilde{k} w^{\frac{p+1}{2}}(t) 
				\end{equation*}	
				where $\tilde{k}:=\left( \dfrac{p-1}{p+1}-c \left\|U_{h}(0) \right\|^{-\beta}_{p+1}\right)>0$ for a large initial data if $q< \dfrac{2p}{p+1}$ \\and $b_{\infty}<\dfrac{p-1}{2}\bigg(\dfrac{2}{p+1}\bigg)^{1/p+1}$ if $q=\dfrac{2p}{p+1}$.\\
				Integrating again over $[t,T^*_{h})$ and we do the same calculations as before, we get
				\begin{eqnarray*}
					&& w(t)\leq \left(\dfrac{1}{k(p-1)}\right)^{\frac{2}{p-1}} \left(T^*_{h}-t\right)^{\frac{-2}{p-1}}.\\
					&\Rightarrow& \sum\limits_{j=1}^{N} u_{j}^{2}(t)\leq \left(\dfrac{1}{k(p-1)}\right)^{\frac{2}{p-1}} \left(T^*_{h}-t\right)^{\frac{-2}{p-1}}.
				\end{eqnarray*} 
				we use now that
				\begin{equation*}
				\max\limits_{1\leq j\leq N}u_{j}(t)\leq\left(\sum\limits_{j=1}^{N} u_{j}^{2}(t)\right)^{\frac{1}{2}}\leq \left(\dfrac{1}{k(p-1)}\right)^{\frac{1}{p-1}} \left(T^*_{h}-t\right)^{\frac{-1}{p-1}}
				\end{equation*}	
				Therefore we obtain
				\begin{equation*}
				\max\limits_{1\leq j\leq N} u_{j}(t)\leq\left(\dfrac{1}{k(p-1)}\right)^{\frac{1}{p-1}} \left(T^*_{h}-t\right)^{\frac{-1}{p-1}}
				\end{equation*}
				This finishes the proof of the estimation of the numerical blow up rate.
			\end{proof}
			In the next section, we prove that $U_{h}$ blows up in $l^{2}$ norm and we give an estimation of the blow up time.
		\section{Estimation of the numerical blow up time}
		\noindent Let $k:= 2\left( \dfrac{p-1}{p+1}-c \left\|U_{h}(0) \right\|^{-\beta}_{p+1}\right)>0$ and we consider the solution of the equation
			\begin{equation}
			R'(t)= k R^{\frac{p+1}{2}}(t), \ \ \ \ \ \ R(0)=R_{0}\leq F(0)=\left\| U_{h}(0)\right\|^{2}_{2}.
			\label{R}
			\end{equation}
			Clearly, the solution $R$ blows up in a finite time $T$.	
		
			Using \eqref{R} we get 
			\begin{eqnarray*}
		\nonumber	T&=& \int\limits_{0}^{T}dt\\
		\nonumber	&=& \int\limits_{0}^{T}\dfrac{R'(t)}{k R^{\frac{p+1}{2}}(t)}dt\\
		\nonumber	&=& \int\limits_{R_{0}}^{+\infty}\dfrac{d\xi}{k \xi^{\frac{p+1}{2}}}\\
			&=& \dfrac{1}{(p-1)\left( \dfrac{p-1}{p+1}-c \left\|U_{h}(0) \right\|^{-\beta}_{p+1}\right)R_{0}^{\frac{p-1}{2}}}
			\label{23}
			\end{eqnarray*}
			with $\lim\limits_{t\longrightarrow T}R(t)=+\infty.$\\
			Finally, using the relation \eqref{F} and the theory of differential inegality, we know that if $F(0)\geq R(0)$ then $F(t)\geq R(t)$ for all $t\in(0,T)$. Hence if we choose $R_{0}=F(0)=\left\|U_{h}(0)\right\|_{2}^2$, then \eqref{F} implies that $F(t)\longrightarrow +\infty$ as ${t\longrightarrow T^*_{h}}$ and $T^*_{h}$ is estimated by:
			\begin{equation*}
			T^*_{h}\leq \dfrac{1}{(p-1)\left( \dfrac{p-1}{p+1}-c \left\|U_{h}(0) \right\|^{-\beta}_{p+1}\right)\left\|U_{h}(0)\right\|_{2}^{p-1}}. 
			\end{equation*}
			Note that the relation $\left\|.\right\|_{2}\leq \left\|.\right\|_{\infty}$ implies that if the solution blows up in $l^{2}$ norm, then it blows up in $l^{\infty}$ norm.\\
		In an other hand, using relation \eqref{estimate} and integration over $[0,T^*_{h})$ we get
		 \begin{equation*}
		 T^*_{h}\geq \dfrac{1}{(p-1)\left(\sum\limits_{j=1}^{N}u_{j}^{2}(0)\right)^{\frac{p-1}{2}}}.
		 \end{equation*}
\section{Numerical experiments}
In this section, we give some computational results concerning the blow up of the numerical solution and the nonincreasing on the numerical energy $J$ .\\
We study also, the effect of the function $b$ and the parameter $q$ on the behavior of the solution.\\
In a first step, we will take $b$ as a positive constant and we will study its effect on the behavior of the numerical solution. More precisely, we will confirm the theoretical results proved in \cite{alfonsi} concerning the upper bound of $b_\infty$ when $q=\dfrac{2p}{p+1}$. \\
In a second step, we will take $b$ as a positive and continuous function and we will study its effect on the behavior of the numerical solution. Concerning the theoretical results of this case, it will be done in a next paper.\\
\newline
In articles \cite{khe} and \cite{hani}, we took a positive and symmetric initial data and we have proved that solution blows up. In order to prove that symmetry has no effect on the blowing up result, we take in figure \ref{fig1}, $u_{0}(x)=10^3x^2(1-x^2)\exp(x-1)$ which is a nonnegative and nonsymmetric function satisfying $J(u_0)<0$. We will prove that the solution blows up in a finite time which confirms the result of this paper and that blowing up occurs even if the initial data is symmetric or not. We prove also that blowing up is localised in the maximum point, which confirms the result of Theorem 1.4 in \cite{hani}.
\begin{figure}[H]
\centering	
\includegraphics[width=\textwidth,height=6cm]{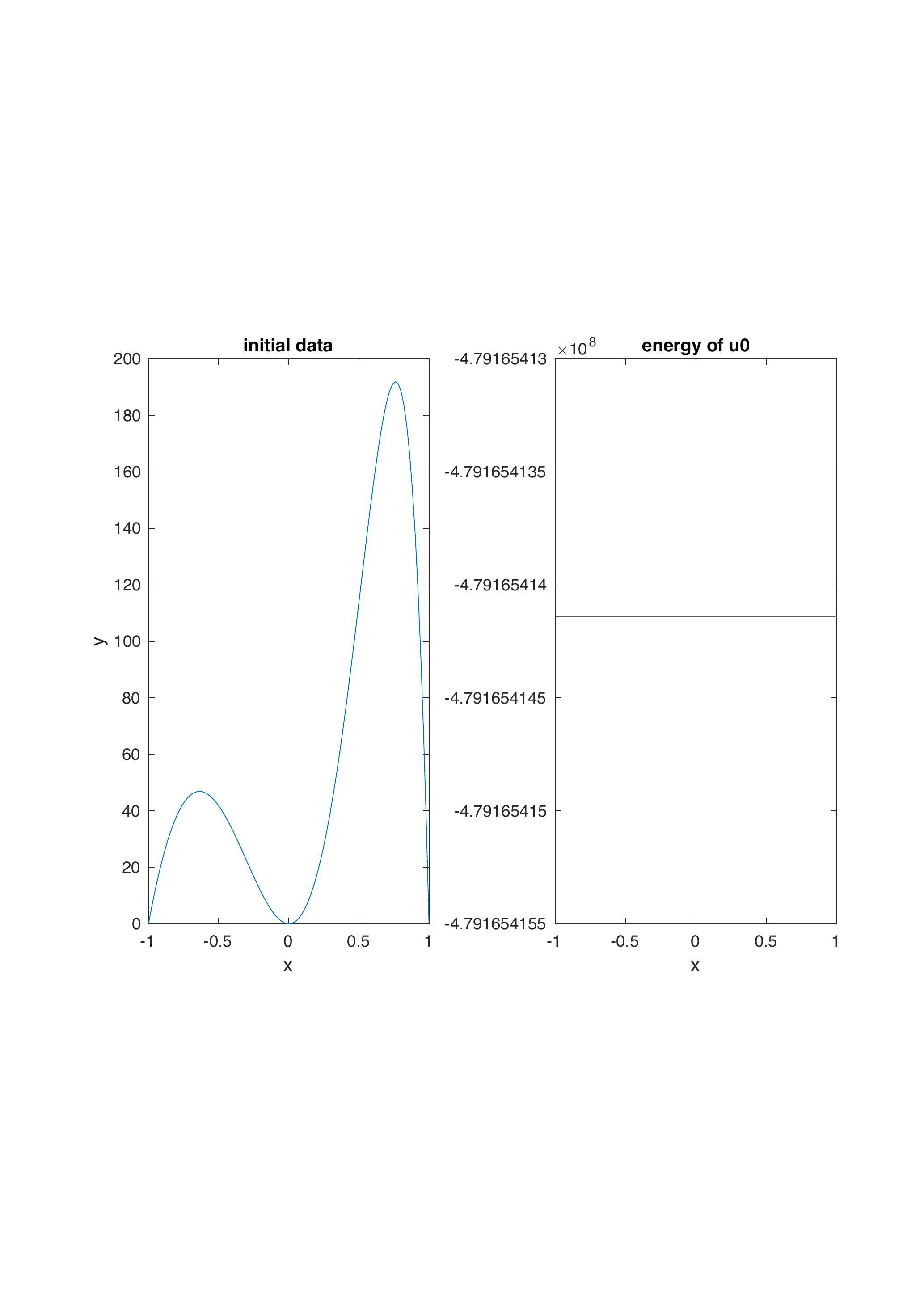}
\caption{Initial data: positive, nonsymmetric with a negative energy}
\label{fig1}
\end{figure} 
Next, we choose, $p=3$, $q=1.3<\dfrac{2p}{p+1}$ and $b=1$.  
In figure \ref{fig2},  we can see that the solution blows up on the maximum point and in figure \ref{fig3}, we prove the nonincreasing of the energy .
\begin{figure}[H]
			\centering
\includegraphics[width=\textwidth,height=6cm]{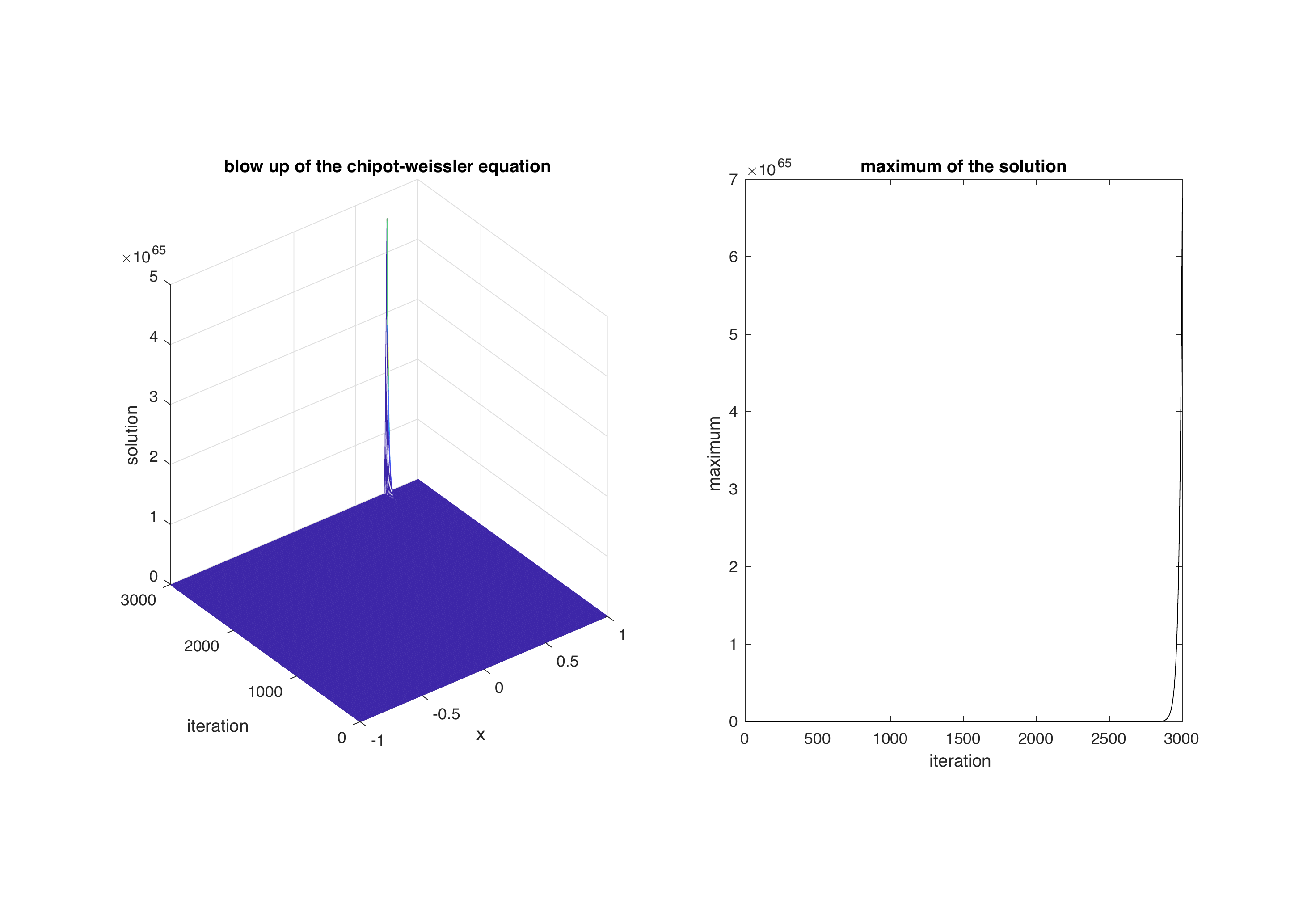}
			\caption{Blow-up of the numerical solution for a nonsymmetric initial data}
\label{fig2}
\end{figure} 		
\begin{figure}[H]
\centering
\includegraphics[width=12cm,height=6cm]{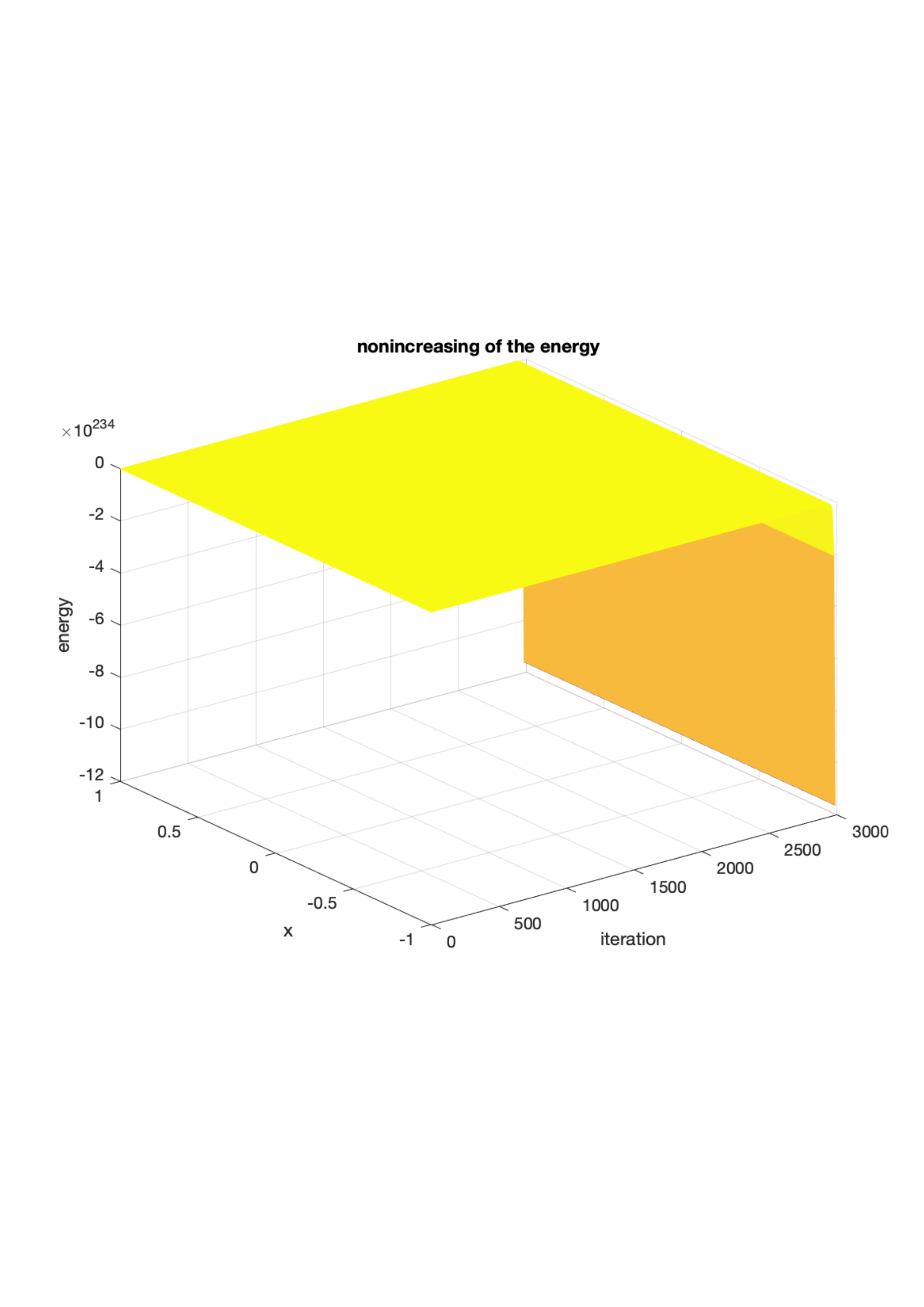}
\caption{Nonincreasing of the energy}
\label{fig3}
\end{figure} 
Note that figures \ref{fig1}, \ref{fig2} and \ref{fig3} complete results of \cite{khe} and \cite{hani}, so we prove that the solution blows up in the maximum point whether the solution is symmetric or not.\\
Next we choose a symmetric initial data $u_{0}(x)=10^3\sin(\dfrac{\pi}{2}(x+1))$ (see figure  \ref{fig4} )we will study the numerical effect of $b$ when $q=\dfrac{2p}{p+1}$ and $q<\dfrac{2p}{p+1}$.
\begin{figure}[H]
	\centering	
	\includegraphics[width=\textwidth,height=6cm]{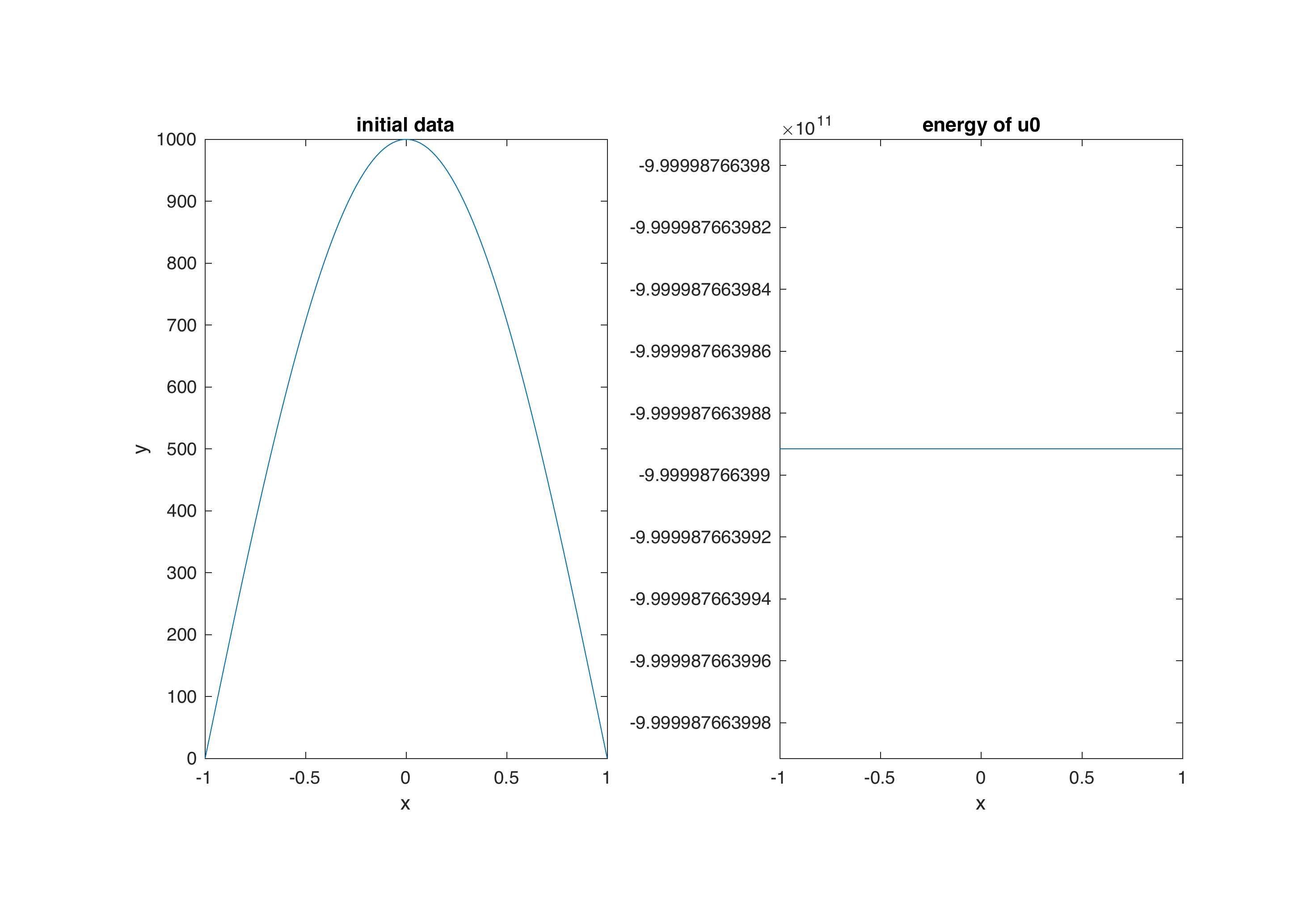}
	\caption{Positive and symmetric initial data with a negative energy }
	\label{fig4}
\end{figure} 
In figure \ref{fig5}, for $b=1$ and $q<\dfrac{2p}{p+1}$  we can see that the numerical solution blows up and in figure \ref{fig6}, we prove the nonincreasing of the energy.	
\begin{figure}[H]
	\centering
	\includegraphics[width=\textwidth,height=6cm]{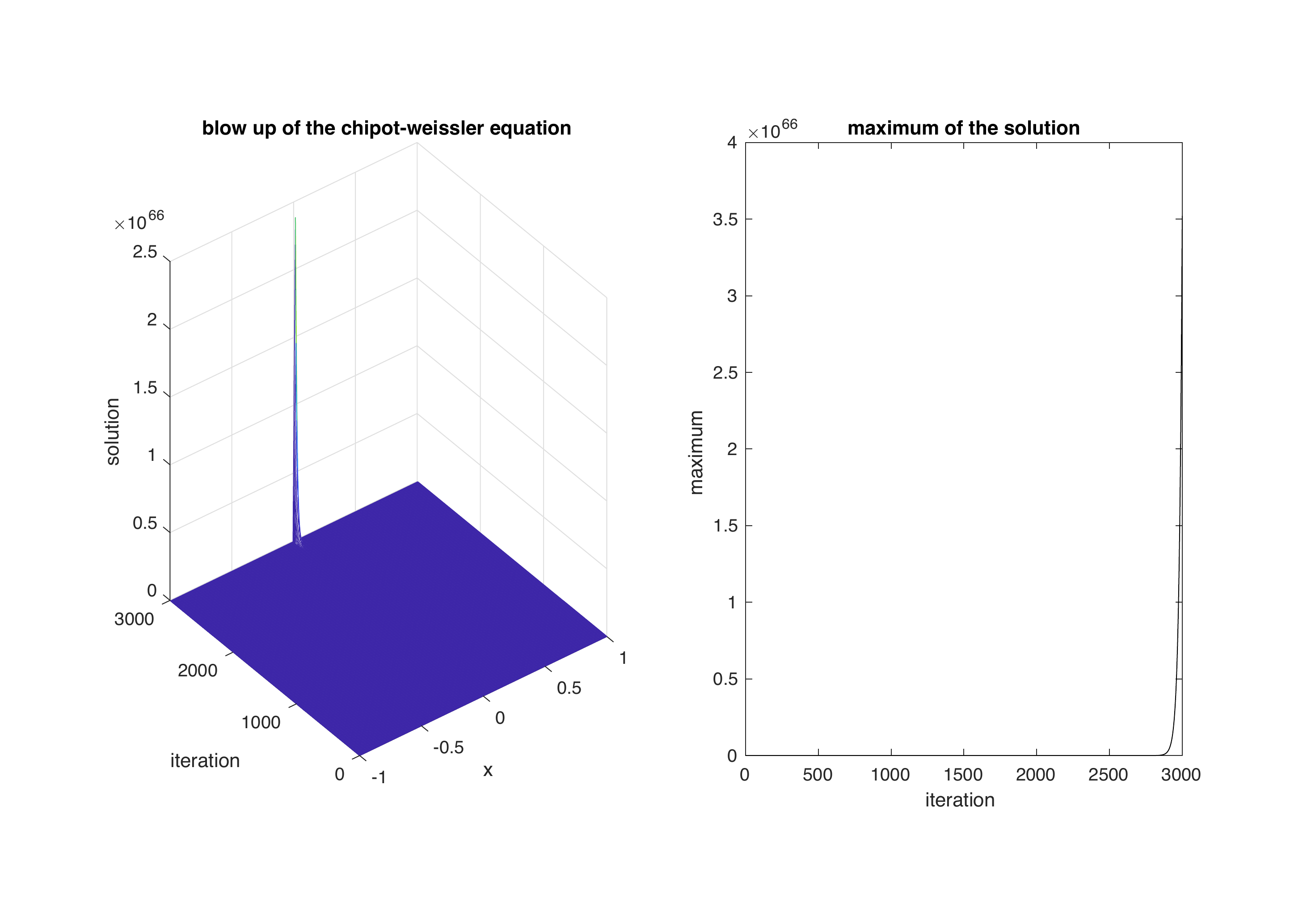}
	\caption{Blow-up of the numerical solution for a symmetric initial data for $b=1$ and $q<\dfrac{2p}{p+1}$  }
	\label{fig5}
\end{figure} 	
\begin{figure}[H]
	\centering
	\includegraphics[width=12cm,height=6cm]{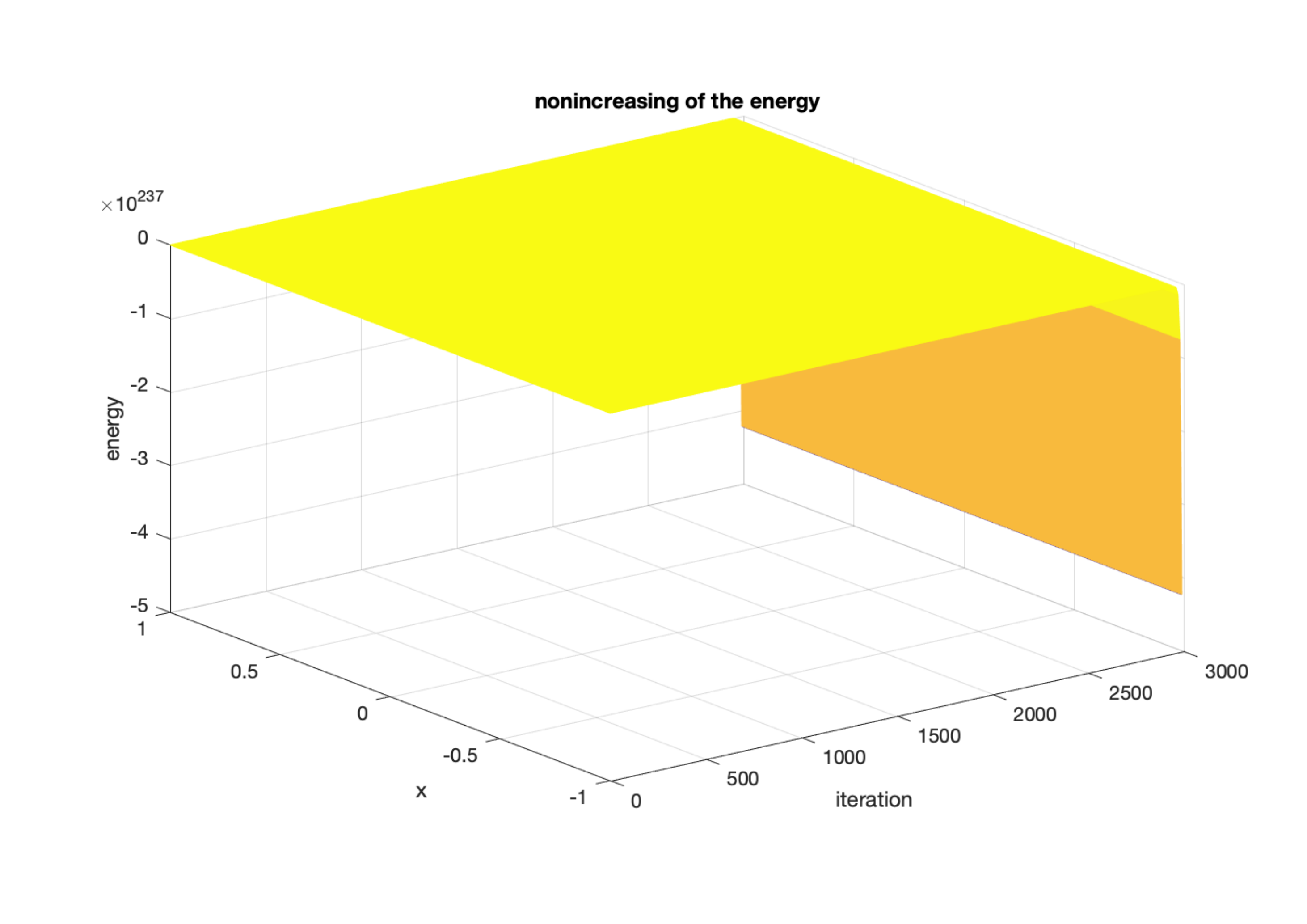}
	\caption{Nonincreasing of the energy}
	\label{fig6}
\end{figure} 
Next, if we take $q=\dfrac{2p}{p+1}$ we get figure \ref{fig7}
\begin{figure}[H]
	\centering
	\includegraphics[width=12cm,height=6cm]{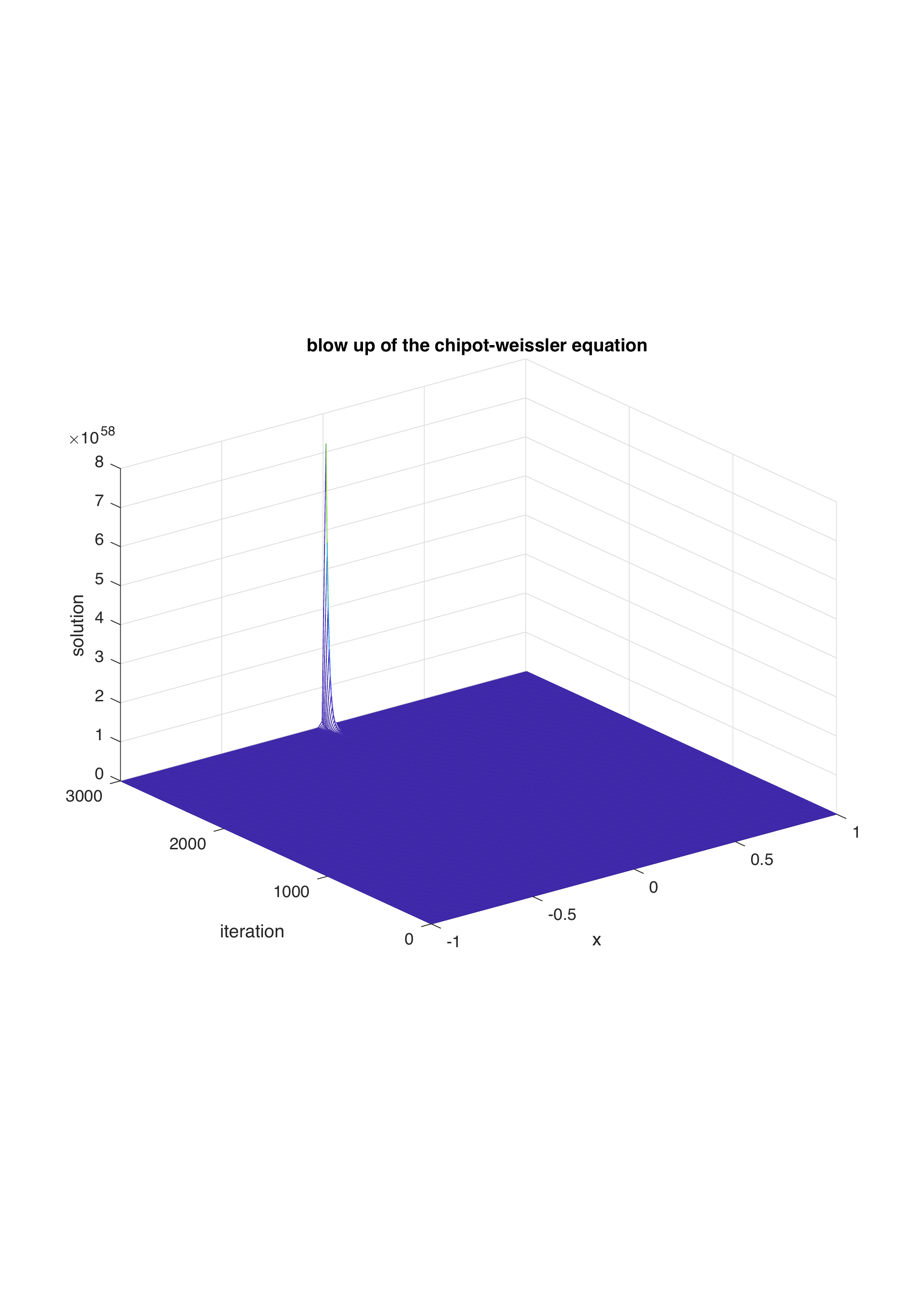}
	\caption{blow up of the numerical solution $b=1$ and $q=\dfrac{2p}{p+1}$ }
	\label{fig7}
\end{figure} 
If we compare  figure \ref{fig5} and figure  \ref{fig7}, we can see the damping effect of the gradient term for $b=1$: the maximum of the numerical solution when $q<\dfrac{2p}{p+1}$ is greater than the maximum when $q=\dfrac{2p}{p+1}$.\\
In order to study the effect of the constant $b$, we will prove numerically that $b$ has no effect when $q<\dfrac{2p}{p+1}$ contrary to the case $q=\dfrac{2p}{p+1}$.\\
Let $p=3$ and $q=1.3<\dfrac{2p}{p+1}$, in figures \ref{fig8}, \ref{fig9}, \ref{fig10} and \ref{fig11} we take $b=1$, $b=10$, $b=100$ and $b=1000$, we can see that the numerical solution has the same profile for different values of $b$, which confirms that $b$ has no effect when $q< \dfrac{2p}{p+1}$.\\
\begin{minipage}{0.56\textwidth}
	\begin{figure}[H]
		\includegraphics[width=\textwidth,height=6cm]{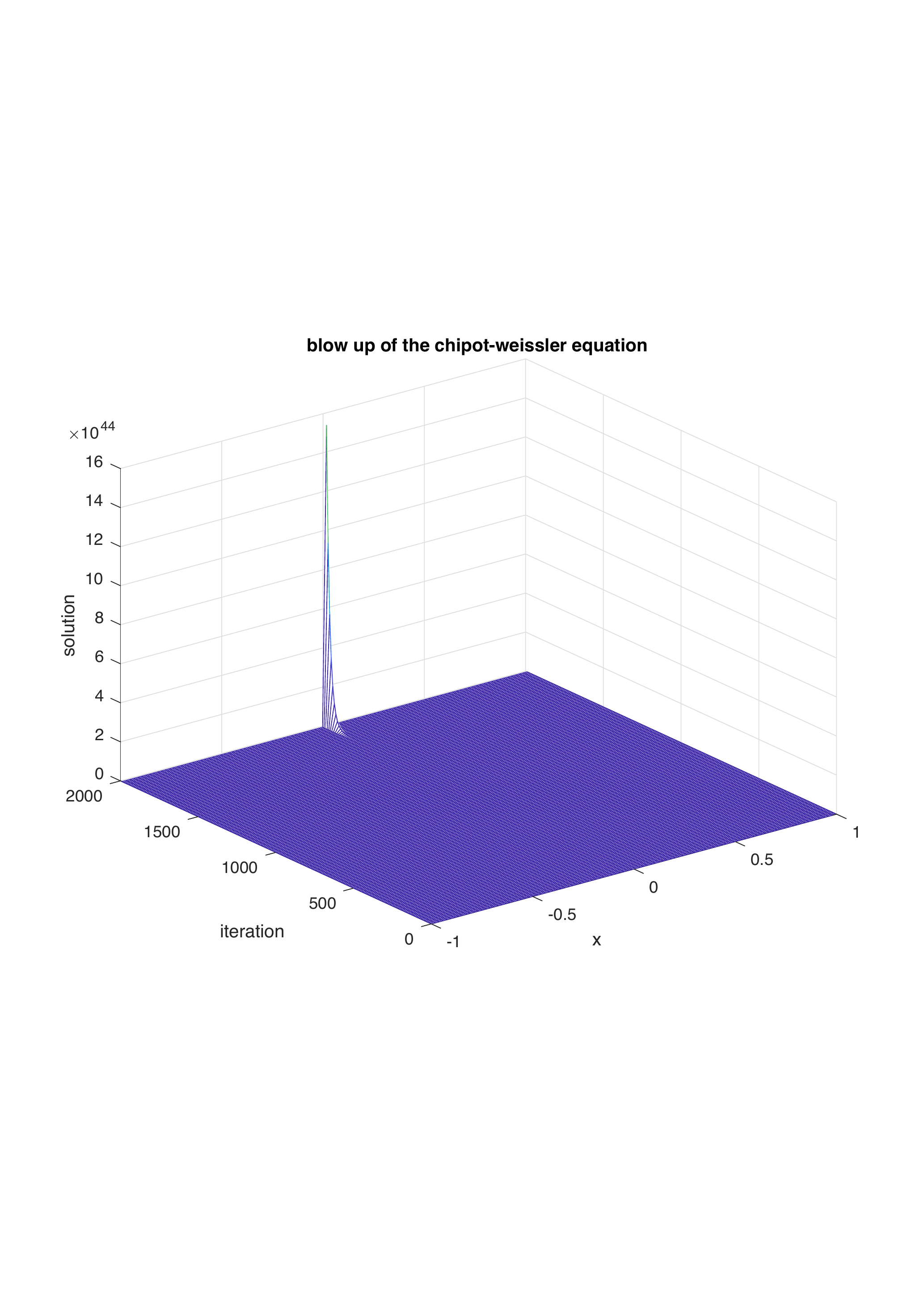}
	    \caption{blow up of the solution for $b=1$}
		\label{fig8}
	\end{figure}
\end{minipage}
\hspace{5ex} 
\begin{minipage}{0.56\textwidth}
	\begin{figure}[H]
		\includegraphics[width=\textwidth,height=6cm]{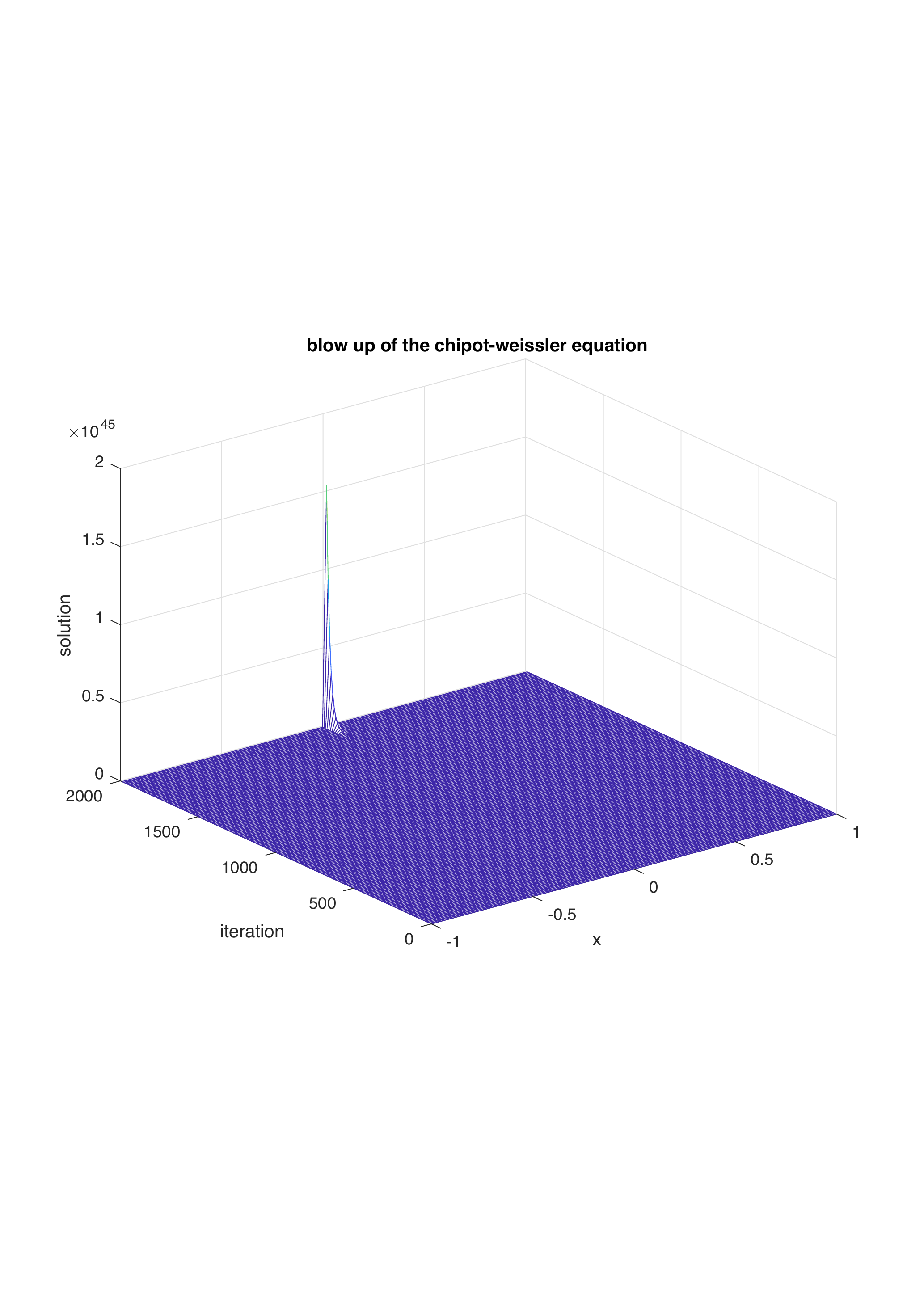}
	\caption{blow up of the solution for $b=10$}
		\label{fig9}
	\end{figure}
\end{minipage}\\
\begin{minipage}{0.56\textwidth}
	\begin{figure}[H]
		\includegraphics[width=\textwidth,height=6cm]{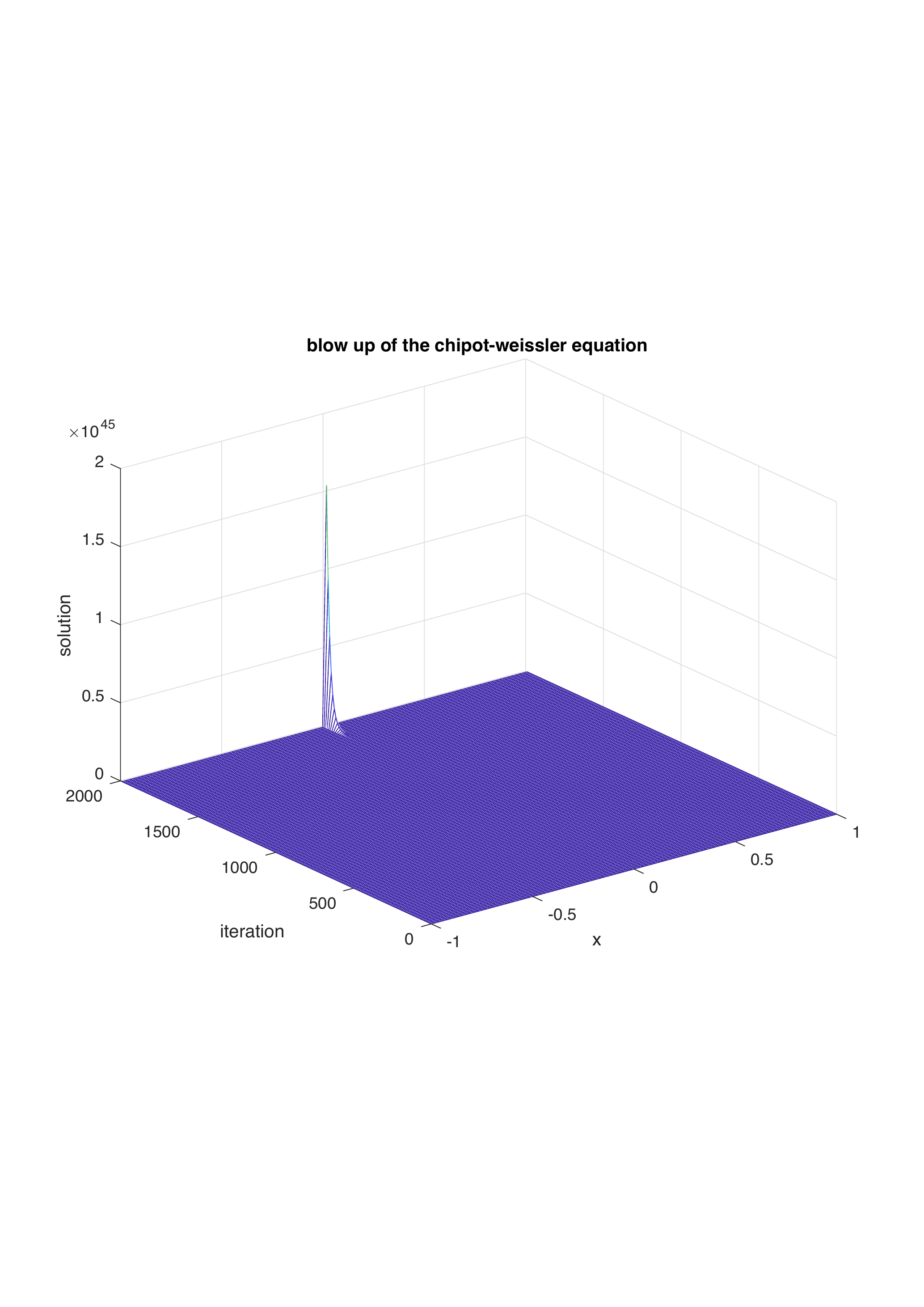}
		\caption{blow up of the solution for $b=100$}
		\label{fig10}
	\end{figure}
\end{minipage}
\hspace{5ex} 
\begin{minipage}{0.56\textwidth}
	\begin{figure}[H]
		\includegraphics[width=\textwidth,height=6cm]{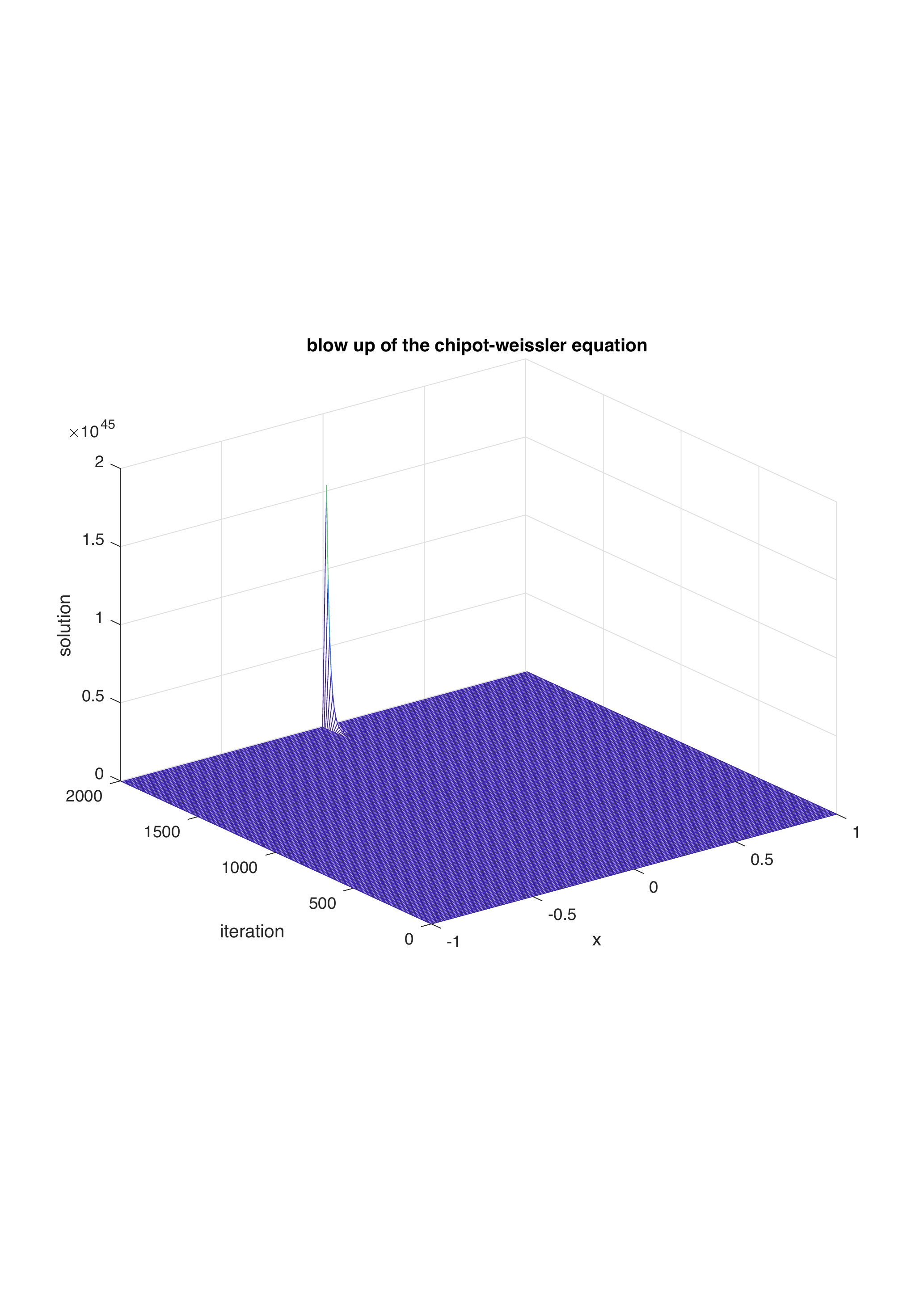}
		\caption{blow up of the solution for $b=1000$}
		\label{fig11}
	\end{figure}
\end{minipage}
Let now study the effect of $b$ for $q=\dfrac{2p}{p+1}$. In \cite{alfonsi}, authors proved the next theorem:
\begin{th1}
Let $\Omega \subset\mathbb{R}$, $1<p<5$, $q=\dfrac{2p}{p+1}$ and $0<b<\dfrac{p-1}{2}\bigg(\dfrac{2}{p+1}\bigg)^{1/p+1}$, then solution blows up in finite time for a positive initial data $u_0$ sufficiently regular satisfying $E(u_0)<0$ and $\Delta u_0-b|\nabla u_0|^q+u^p_0\geq 0.$
\end{th1}
In this paper we choose $p=3$. In figures \ref{fig12}, \ref{fig13} and \ref{fig14}, we take $b=1$, $b=1.48$ and $b=1.49$. Remark that $b=1.48$ is the greatest value of $b$ where the solution conserve positivity. For $b>1.48$, solution becomes nonpositive: there exists $x^*$ such that $u(x^*)<0$. Which confirms the result of the above theorem.\\
\begin{minipage}{0.56\textwidth}
	\begin{figure}[H]
	\includegraphics[width=\textwidth,height=6cm]{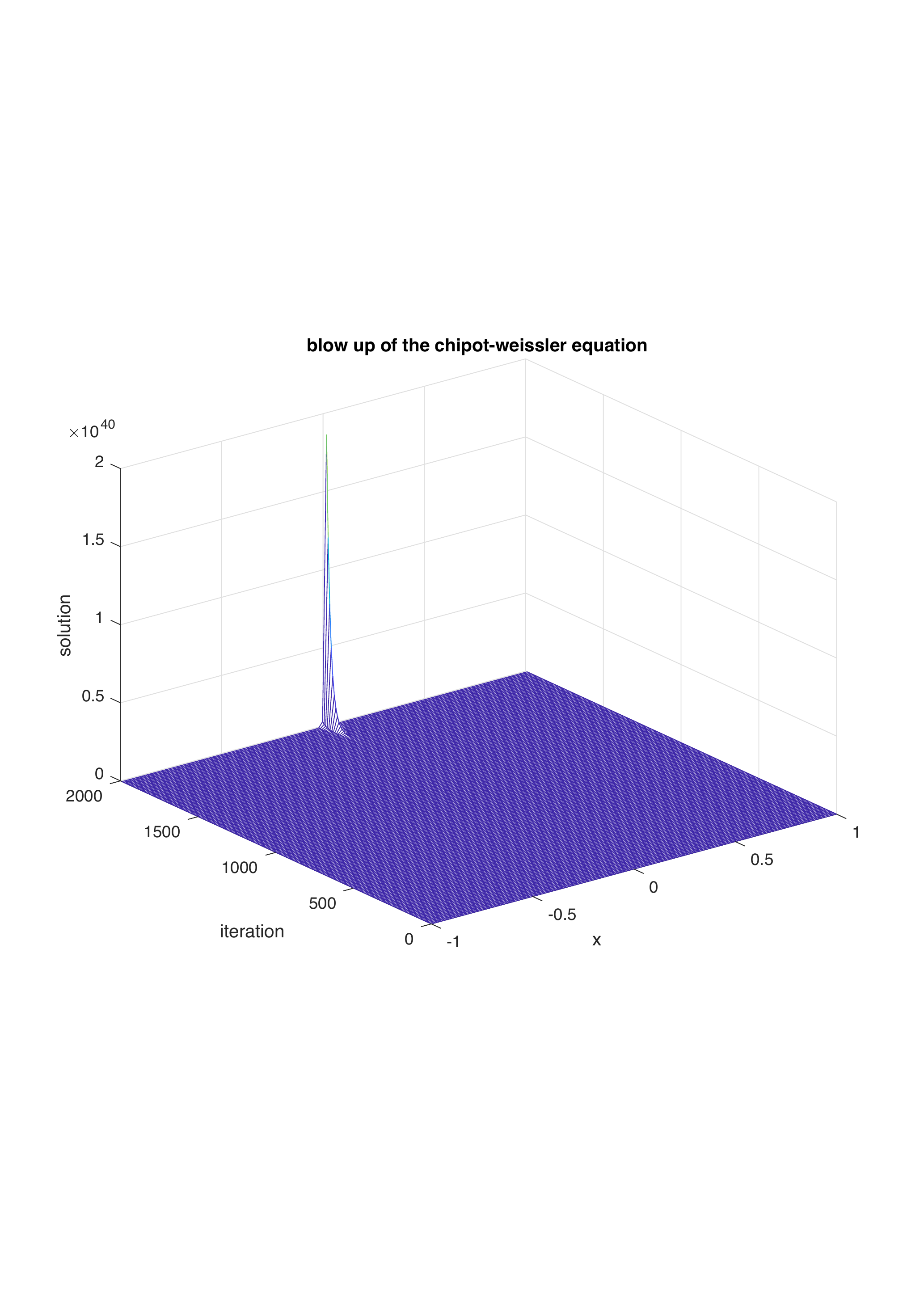}
	\caption{behavior of the numerical solution for $b=1$}
	\label{fig12}
\end{figure}
\end{minipage}
\begin{minipage}{0.56\textwidth}
	\begin{figure}[H]
	\includegraphics[width=\textwidth,height=6cm]{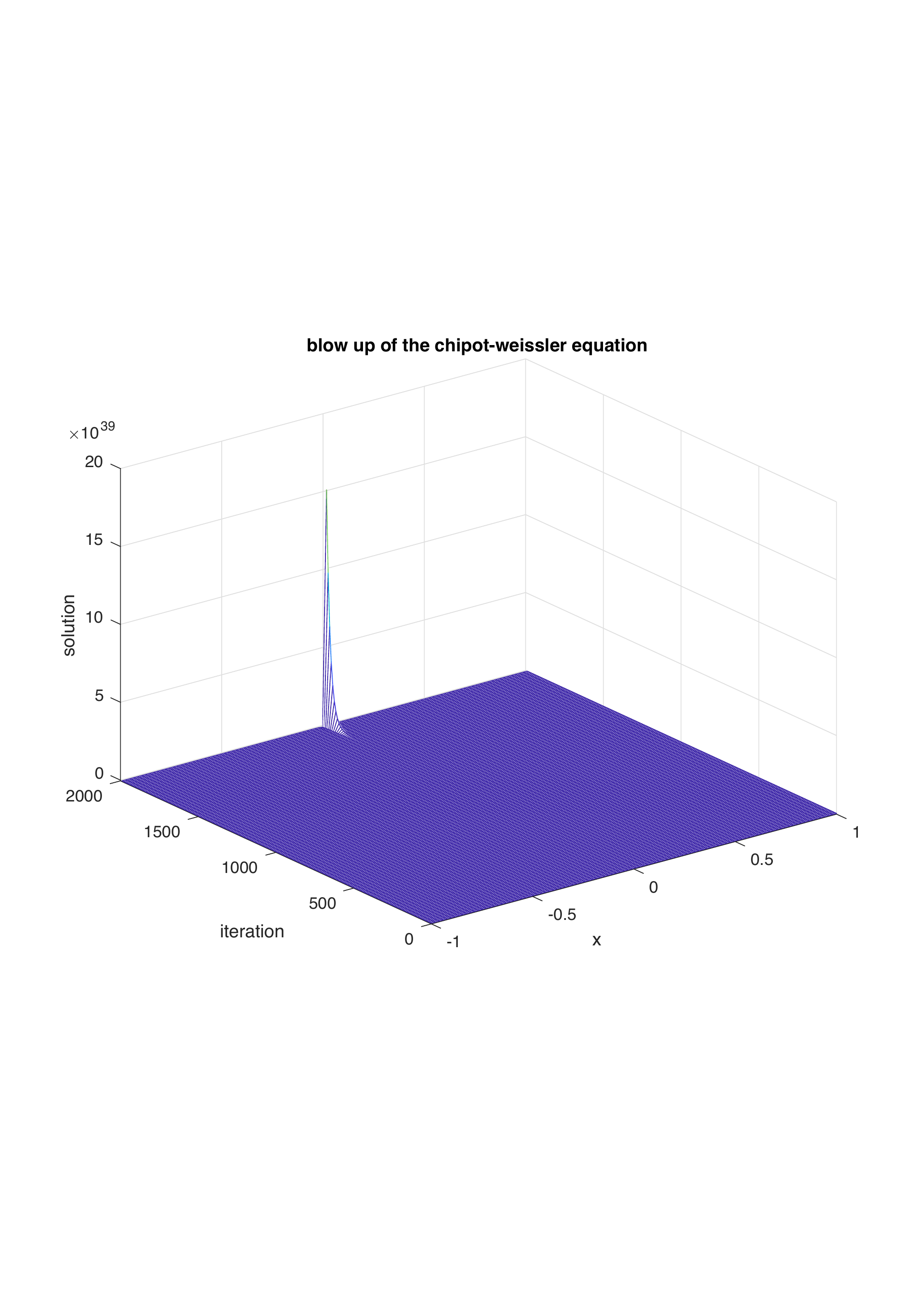}
	\caption{behavior of the numerical solution for $b=1.48$}
	\label{fig13}
\end{figure}
\end{minipage}
\begin{center}
\begin{minipage}{0.56\textwidth}
	\begin{figure}[H]
		\centering
	\includegraphics[width=12cm,height=6cm]{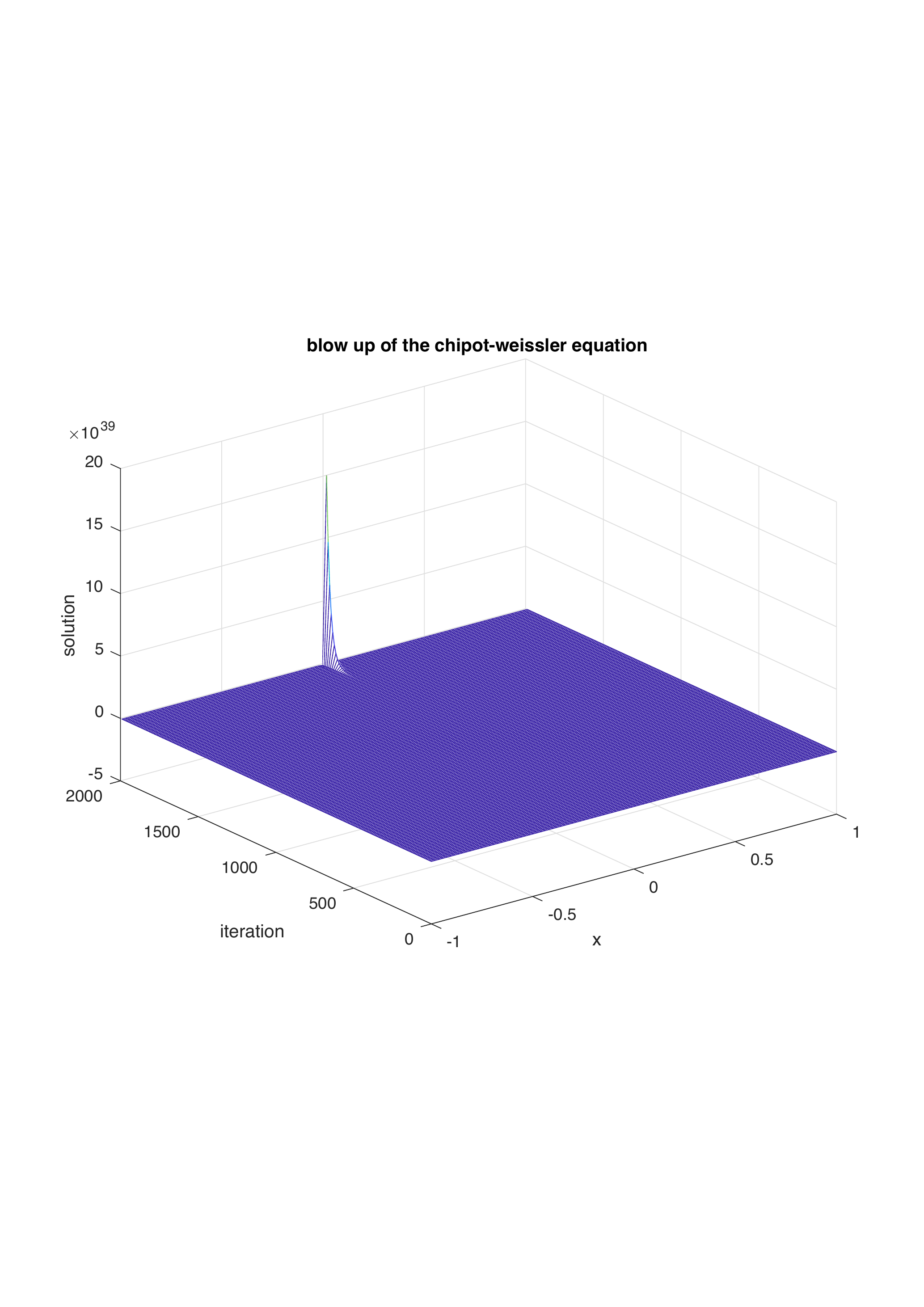}
	\caption{behavior of the solution for $b=1.49$}
	\label{fig14}
\end{figure}
\end{minipage}
\end{center}
Finally, let $b$ be a continuous, positive and bounded function. We will study the effect of the function $b$ on the behavior of the numerical solution when $b_{\infty}$ is small and large.\\
In figures \ref{fig15} and \ref{fig16}, and for $p=3$, $q=\dfrac{2p}{p+1}$, we take $b(x)=\exp(-x^3)$ then $b(x)=10^3\exp(x^3)$ respectively. We study the behavior of the numerical solution for a different iterations in time.  We can see that for $b_{\infty}$ sufficiently large, solution becomes negative, but for $b_{\infty}$ sufficiently small, the numerical solution has the same properties as the exact one, which proves that for $q=\dfrac{2p}{p+1}$, the function $b$ has an effect on the behavior of the numerical solution.\\
\begin{minipage}{0.56\textwidth}
	\begin{figure}[H]
		\includegraphics[width=\textwidth,height=6cm]{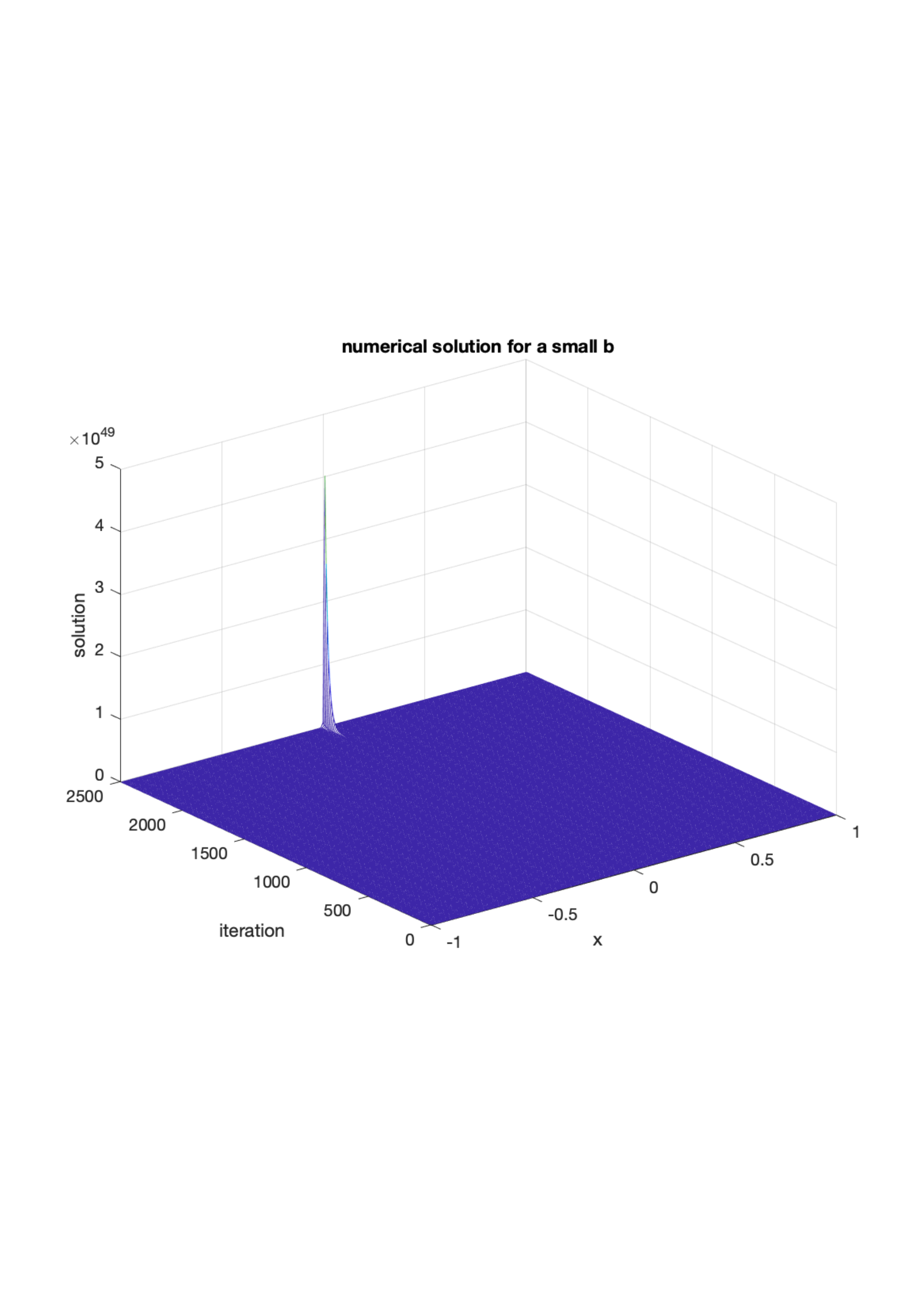}
		\caption{behavior of the numerical solution for $b_{\infty}$ sufficiently small}
		\label{fig15}
	\end{figure}
\end{minipage}
\begin{minipage}{0.56\textwidth}
	\begin{figure}[H]
		\includegraphics[width=\textwidth,height=6cm]{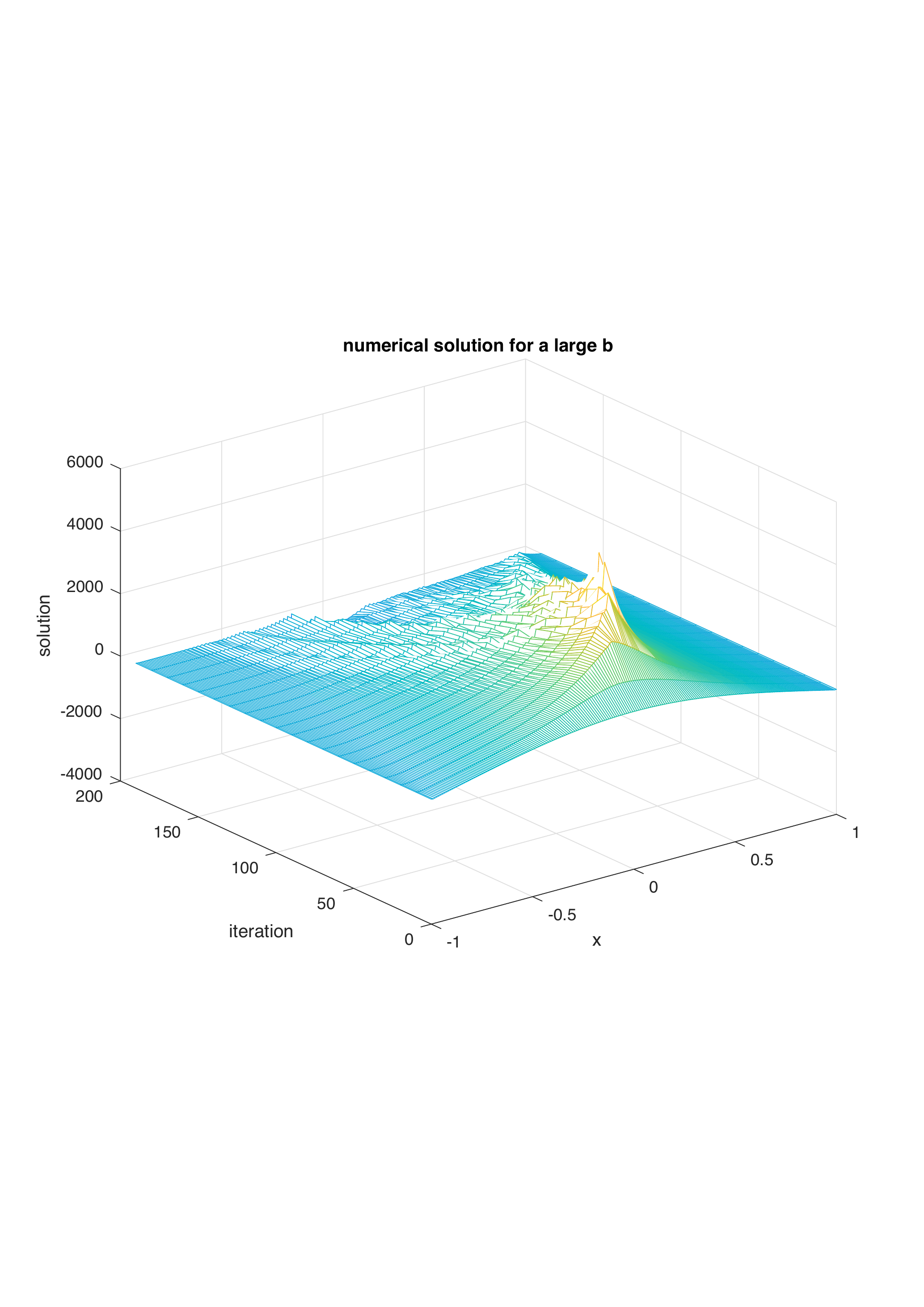}
		\caption{behavior of the numerical solution for $b_{\infty}$ large}
		\label{fig16}
	\end{figure}
\end{minipage}
\vspace{1cm}
\begin{center}
	\textbf{Conclusion and open problems}
\end{center}
In this paper, we have considered a nonlinear parabolic problem. We have proved that the numerical solution blows up in a finite time.\\
Problem \eqref{exacte} was studied by many authors in the case $b\in \mathbb{R}$. For example, Tayachi, Zaag, Weissler and others have studied the self-similar blow up profiles (see \cite{soupletweissler} and the references therein). Also, the blow up set was investigated in \cite{kawohl}. And the blow up rate was studied in \cite{souplettayachi}.\\ 
All these results was proved theoretically. In our last papers \cite{khe} and \cite{hani}, we have studied numerically the blow up rate and the blow up set.\\
In a next paper, we are studying the numerical self-similar blow up profile and we are trying to give answers to these questions:
\begin{itemize}
		\item The existence of blow up solutions for problem \eqref{exacte} in the case $q=\dfrac{2p}{p+1}$ is known (theoretically and numerically) only for $0\leq b<\dfrac{p-1}{2}\bigg(\dfrac{2}{p+1}\bigg)^{1/p+1} $. What happens when $b>\dfrac{p-1}{2}\bigg(\dfrac{2}{p+1}\bigg)^{1/p+1}$?
	\item Does the numerical profile is the same as the theoretical one?
	\item In \cite{soupletweissler}, authors proved the existence of non trivial backward self similar solution for $0<b\leq 2$ and $q=\dfrac{2p}{p+1}$. they have no idea about the profile when $b>2$. Can we provide a numerical result in this case?
	\item What happens if we remplace $b(x)$ with $f(u)$?
\end{itemize} 

\end{document}